\newtheorem{thm}{Theorem}[section]
\newtheorem{cor}[thm]{Corollary}
\newtheorem{lem}[thm]{Lemma}
\newtheorem{prop}[thm]{Proposition}
\newtheorem{defn}[thm]{Definition}
\newtheorem{conj}[thm]{Conjecture}
\newtheorem{rem}[thm]{Remark}
\numberwithin{equation}{section}
\newcommand{\defin}[1]{{\bf\emph{#1}}}
\newcommand{\Field}{{\mathbb {F}}}
\newcommand{\HFKa}{{\widehat {\rm {HFK}}}}
\newcommand{\posit}{\mathcal P}
\newcommand{\negat}{\mathcal N}
\newcommand{\Z}{\mathbb Z}
\newcommand{\Q}{\mathbb Q}
\begin{document}

\title{Purely cosmetic surgeries and pretzel knots}

\author{Andr\'{a}s I. Stipsicz}
\address{R\'enyi Institute of Mathematics\\
H-1053 Budapest\\ 
Re\'altanoda utca 13--15, Hungary}
\email{stipsicz.andras@renyi.hu}

\author{Zolt\'an Szab\'o}
\address{Department of Mathematics\\
Princeton University,\\
 Princeton, NJ, 08544}
\email{szabo@math.princeton.edu}

\begin{abstract}
 We show that all pretzel knots satisfy the (purely)
 cosmetic surgery conjecture, i.e.  Dehn surgeries with different slopes
 along a pretzel knot provide different oriented three-manifolds.
\end{abstract}
\maketitle


\section{Introduction}
\label{sec:intro}
Suppose that $K\subset S^3$ is a knot in the three-sphere and $r\in
{\mathbb {Q}}$ a rational number.  Let $S^3_r (K)$ denote the effect
of Dehn surgery along $K$ with coefficient $r$.  The Purely Cosmetic
Surgery Conjecture (PCSC for short) asserts:
\begin{conj}[PCSC]
  For every nontrivial knot $K$, the orientation-preserving diffeomorphism
  $S_s^3(K)\cong S_r^3(K)$ for $s,r\in \Q$ implies that $s=r$.
  \end{conj}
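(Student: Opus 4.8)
The plan is to suppose an orientation-preserving diffeomorphism $S^3_s(K)\cong S^3_r(K)$ with $s\ne r$ and to whittle down the admissible pairs $(s,r)$ using successively finer invariants until none can survive for a nontrivial $K$. First I would record that $H_1$ of a surgered manifold has order equal to the numerator of the slope, so writing $s=p/q$ and $r=p/q'$ in lowest terms with $p\ge 0$, the two slopes share the same $p$. The Casson--Walker invariant obeys a surgery formula of the shape $\lambda\big(S^3_{p/q}(K)\big)=\lambda\big(L(p,q)\big)+\tfrac{q}{p}\,\theta(K)$ with $\theta(K)=\tfrac12\Delta_K''(1)$; since an orientation-preserving diffeomorphism preserves $\lambda$, matching this across the identification together with the homological data forces $q'=-q$, i.e. $r=-s$, and simultaneously pins $\theta(K)=0$ (equivalently the Conway coefficient $a_2(K)=0$). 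This is the Boyer--Lines reduction, and it already eliminates every knot with $\Delta_K''(1)\ne 0$.

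Next I would exploit the Heegaard Floer correction terms. Using the Ni--Wu formula expressing the $d$-invariants of $S^3_{p/q}(K)$ in each $\Spin^c$ structure in terms of the nonnegative integers $V_i(K)$ and the lens-space correction terms, an orientation-preserving identification of $S^3_{p/q}(K)$ with $S^3_{-p/q}(K)$ matches the two full sets of correction terms. This congruence forces $q^2\equiv -1\pmod p$ and imposes a rigid symmetry on the $V_i(K)$; in particular it drives $V_0(K)=0$, whence $\tau(K)=0$. At this stage the surviving candidates are sharply constrained: the slope is $\pm p/q$ with $q^2\equiv -1\pmod p$ and the knot satisfies $a_2(K)=V_0(K)=0$.

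I would then bring in the immersed-curve reformulation of $\HFKa$ to bound the genus and the slope. Pairing the immersed-curve invariant of $K$ with the surgery curve shows that a purely cosmetic pair can only occur for slope $\pm 2$, and then only when $g(K)=2$, or else for slope $\pm 1/q$; moreover the graded pairing must agree simultaneously in all $\Spin^c$ structures. This is the Hanselman reduction, and combined with the previous two steps it leaves exactly two families to kill: (i) slope $\pm 1/q$ with $a_2(K)=V_0(K)=0$, and (ii) slope $\pm 2$ with $g(K)=2$ and $a_2(K)=V_0(K)=0$.

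The hard part will be the final, uniform elimination of these two families for every nontrivial $K$. For family (i) the idea is to read off from the immersed curve that the graded groups $\hf\big(S^3_{1/q}(K)\big)$ and $\hf\big(S^3_{-1/q}(K)\big)$ must disagree in some grading unless all of the ``turning'' data of the curve cancels, which happens only for the unknot; the obstruction here is controlling the higher $V_i(K)$ and the fine gradings at once. For family (ii) one must show that no genus-two knot carries the very special $\HFKa$ together with the sign-refined correction-term profile demanded by a slope-$\pm2$ coincidence. I expect this to be the genuine crux: it is not a single computation but a structural statement about all genus-two knots, and it is precisely the place where one either proves a uniform rigidity theorem for the immersed-curve data in low genus or, failing that, must organize the remaining candidates by an explicit determination of $\HFKa$ and the $V_i(K)$.
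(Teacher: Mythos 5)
There is a genuine gap here, and it is worth being precise about what it is. The statement you are asked about is a \emph{conjecture}: the paper never proves it for all nontrivial knots, and nobody has. The paper's Theorem~\ref{thm:main1} establishes PCSC only for pretzel knots, by combining exactly the reductions you list (the Boyer--Lines obstruction $\Delta_K''(1)=0$, the Ni--Wu constraints $s=-r$, $q^2\equiv -1 \pmod p$ and the $\tau(K)\neq 0$ obstruction, and Hanselman's theorem restricting cosmetic slopes to $\pm 2$ with $g(K)=2$ or to $\pm 1/q$ with $q$ bounded by thickness and genus) with input that is specific to pretzel knots: the thickness bound $th(P)\leq 1$ of Proposition~\ref{prop:thickness}, the genus formulas of Gabai and Kim--Lee, and explicit computations of $a_2$ and $w_3$ via the Conway polynomial and skein relations. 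Your proposal reproduces the general-purpose reductions faithfully, but then your final paragraph is not a proof step at all --- it is an acknowledgment that the two surviving families (slope $\pm 1/q$ knots with $a_2=V_0=0$, and genus-two knots at slope $\pm 2$) still need a ``uniform rigidity theorem'' that you do not supply. That missing theorem is precisely the open content of the conjecture; no such statement about the immersed-curve invariant of an arbitrary knot (or of an arbitrary genus-two knot) is known, and the paper makes no claim in that direction.

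A secondary point: even within the reductions, your claim that matching correction terms across a cosmetic pair ``drives $V_0(K)=0$, whence $\tau(K)=0$'' inverts the logical role these invariants play. In Ni--Wu (and in the paper's Theorem~\ref{thm:niwu}) the implication runs the other way: $\tau(K)\neq 0$ \emph{obstructs} cosmetic surgeries, so one concludes $\tau(K)=0$ for any counterexample; this is then used as a hypothesis to be contradicted in specific families (e.g.\ Lemma~\ref{lem:0145} shows $\tau\neq 0$ for certain five-strand pretzel knots via the $\delta$-grading). If you want to salvage your outline as a contribution, the honest framing is the paper's: these reductions shrink the problem to a family of constraints, and one then needs class-by-class geometric or combinatorial input --- genus formulas, crossing-number bounds, polynomial invariant computations --- to verify the conjecture on a given family of knots, not on all knots at once.
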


The conjecture has been verified for 2-bridge knots \cite{twobridge},
for connected sums~\cite{connectedsum}, for 3-braid
knots~\cite{Varva2}, for knots of Seifert genus one~\cite{Wang}
and for prime knots with at most 16
crossings~\cite{Hanselman}. By the classification of Seifert fibered
spaces, the conjecture also holds for torus knots. Note that $K$ and
its mirror image $m(K)$ satisfies the conjecture at the same time,
since $S^3_r(m(K))=-S^3_{-r}(K)$.

When we relax the condition that the diffeomorphism is
orientation-preserving, there are some examples of knots admitting
diffeomorphic surgeries with different slopes: for example, for an
amphichiral knot $K$ we have that $S^3_r(K)$ and $S^3 _{-r}(K)$ are
diffeomorphic. See \cite{Varva} for further results, including  
theorems for preztel knots.

Suppose that $P=P(a_1, \ldots , a_n)$ is a pretzel knot with $n$ strands,
where $a_i$ denotes the number of half-twists (right-handed for
positive and left-handed for negative $a_i$) on the $i^{th}$ strand,
see Figure~\ref{fig:Pretzel} for an illustration.

Our main result is the verification of PCSC for pretzel
knots:
\begin{thm}\label{thm:main1}
  The Purely Cosmetic Surgery Conjecture holds
  for pretzel knots.
 \end{thm}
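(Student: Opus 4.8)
The plan is to show that no pretzel knot can meet the full list of necessary conditions that a knot admitting a purely cosmetic surgery must satisfy, apart from the cases already settled by the results quoted above (torus knots, and Seifert genus-one knots by \cite{Wang}). First I would assemble the obstructions. By the theorem of Ni and Wu, if $S^3_s(P)\cong S^3_r(P)$ orientation-preservingly with $s\neq r$, then necessarily $r=-s$, so the two slopes form a pair $\{p/q,-p/q\}$ with $q^2\equiv -1 \pmod p$; moreover the comparison of Heegaard Floer correction terms forces the relevant invariants $V_0(P)$ and $V_0(m(P))$ to vanish, which in turn gives $\tau(P)=0$. On the classical side, the Casson--Walker invariant (Boyer--Lines) yields $\Delta''_P(1)=0$. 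Finally, Hanselman's refinement \cite{Hanselman} constrains the slope drastically: either it is $\pm 1/q$ for some positive integer $q$, or it is $\pm 2$ and the Seifert genus satisfies $g(P)=2$.

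With this list in hand, the heart of the argument is a computation for the pretzel family. The Alexander polynomial of $P=P(a_1,\dots,a_n)$ is given by an explicit formula in the half-twist parameters, so the condition $\Delta''_P(1)=0$ becomes an explicit arithmetic relation among $a_1,\dots,a_n$. I would solve this relation to isolate the small, structured subfamily of pretzel knots for which $\Delta''_P(1)$ vanishes; most pretzel knots are eliminated already here. For the survivors I would then impose the Floer conditions $\tau(P)=0$ and $V_0(P)=V_0(m(P))=0$. The alternating (all $a_i$ of one sign) and, more generally, quasi-alternating pretzel knots are thin, so their $\tau$ and $V_0$ are read off from the signature and the vanishing conditions reduce to checkable arithmetic; the remaining mixed-sign pretzel knots require genuine knot Floer computations, which I would carry out from the combinatorial description of $\HFKa$ available for pretzel (Montesinos) knots.

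It then remains to dispose of the two surviving slope regimes. In the regime slope $=\pm 2$, $g(P)=2$, I would intersect the genus-two condition with the list of pretzel knots passing the Alexander and Floer filters; this should leave only an explicit, small family, each member of which is then excluded using Hanselman's additional constraints on the knot Floer homology of a genus-two knot admitting a $\pm 2$ cosmetic surgery. The $\pm 1/q$ regime is the \emph{main obstacle}, precisely because Hanselman's theorem supplies no genus bound there: one cannot reduce to finitely many knots, and the argument must instead show that the simultaneous conditions $\Delta''_P(1)=0$, $\tau(P)=0$, $V_0(P)=V_0(m(P))=0$ are incompatible for every pretzel knot that is not already a torus knot. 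I expect the crux to be exactly this step---controlling the invariants $V_0$ and $\tau$ uniformly for the non-thin, mixed-sign pretzel knots across the whole family, rather than knot-by-knot.
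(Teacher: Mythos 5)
Your inventory of obstructions (Ni--Wu, Boyer--Lines, Hanselman) is assembled correctly, but the plan founders exactly where you say it does: the $\pm 1/q$ regime. The gap is that you quote only the qualitative part of Hanselman's theorem. Its quantitative part states that a purely cosmetic pair of slopes $\pm\frac{1}{q}$ on a knot $K$ forces
\[
q \;\leq\; \frac{th(K)+2g(K)}{2g(K)(g(K)-1)},
\]
where $th(K)$ is the knot Floer thickness. So the $\pm 1/q$ regime is not an unbounded search at all, provided one has an a priori bound on thickness: if $th(K)\leq 5$ and $g(K)\geq 3$, the right-hand side is at most $\frac{11}{12}<1$ and no positive integer $q$ exists (Corollary~\ref{cor:HanselmanThickness}). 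The paper's key technical contribution is precisely such a bound: a Kauffman-state computation of the $\delta$-gradings showing that \emph{every} pretzel knot has $th(P)\leq 1$ (Proposition~\ref{prop:thickness}). Combined with Wang's genus-one theorem (Theorem~\ref{thm:wang}), this reduces PCSC for all pretzel knots to those of Seifert genus exactly $2$ (Corollary~\ref{cor:Hcor}); no uniform control of $\tau$, $V_0$, or $\Delta''$ across the whole family is ever needed. Your proposal contains no substitute for this reduction --- you never propose to bound the thickness of pretzel knots --- and you concede that the $\pm 1/q$ case is a crux you cannot close, so as written the argument does not terminate.

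For the residual genus-two knots, the paper proceeds roughly in the spirit of your back end, but only after the thickness/Hanselman reduction makes the candidate list tractable: the genus formulas of Gabai and Kim--Lee (Theorems~\ref{thm:gabai} and~\ref{thm:gen-pretzel}, with Corollaries~\ref{cor:ThreeStrand} and~\ref{cor:higherstrands}) show that genus two occurs only for certain three-strand knots with an even parameter, a short list of exceptional shapes with $n\geq 4$, and the five-strand all-odd knots. These are then killed by Conway-polynomial computations ($a_2\neq 0$, i.e. $\Delta''(1)\neq 0$), by the $w_3$-invariant via symmetric-function identities in the half-twist parameters (Propositions~\ref{prop:a2Formula} and~\ref{prop:fivestrandcase}), by $\tau(P)\neq 0$ read off directly from the two possible $\delta$-gradings (Lemma~\ref{lem:0145}), and by the already-verified knots with at most 16 crossings. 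Note also that the thickness computation pays a second dividend here: it is what makes the $\tau\neq 0$ argument for the five-strand knots with $0,1,4$ or $5$ negative parameters essentially free, whereas your plan would require genuine $\HFKa$ computations for these mixed-sign, non-quasi-alternating knots.
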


\begin{figure}
\centering
\includegraphics{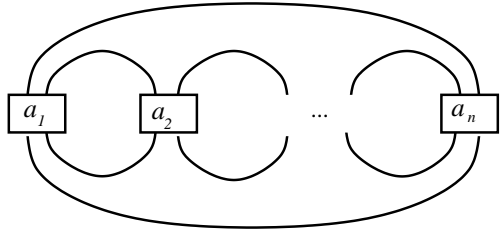}
\caption{{\bf The pretzel knot $P(a_1, \ldots , a_n)$.}  In the
  following we will assume that $a_2, \ldots , a_n$ are odd, and $a_1$
  is either even or odd. In order to have a knot, if $a_1$ is odd,
  then $n$ must also be odd.}
\label{fig:Pretzel} 
\end{figure}
In the following we will always assume that $P$ is a knot, implying that
either 
\begin{itemize}
\item all $a_i$ are odd and $n$ is odd, or
\item exactly one $a_i$ (which can be assumed to be $a_1$) is even, and $n$ is odd, or
\item exactly one $a_i$ (which can be assumed to be $a_1$) is even, and $n$ is even.
\end{itemize}

Note that the order of the $a_i$'s in defining the pretzel knot
$P=P(a_1, \ldots , a_n)$ is important, and in general can be changed
only by the action of the dihedral group (when $P$ is viewed in the
isotopic position shown by Figure~\ref{fig:pretzel2}).
\begin{figure}
\centering
\includegraphics{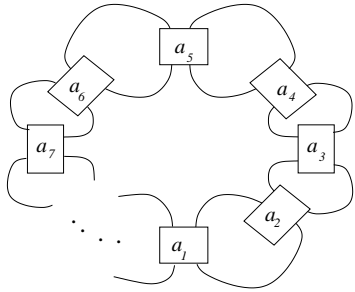}
\caption{{\bf The dihedral action is more visible in this diagram of
    $P(a_1, \ldots , a_n$.}
  The boxes are positioned at the vertices of a regular $n$-gon.}
\label{fig:pretzel2} 
\end{figure}
One noteable exception is that if $a_i=\pm 1$ then it can be commuted
with any other strand (by rotating the two strands together), hence
these can be collected at the end of the string. In addition, there
are two cases when the number of strands can be reduced: if $a_i=1$
and $a_{i+1}=-1$ then these two strands can be eliminated by a simple
isotopy (a Reidemeister 2 move); and if $a_1=2$ and $a_2=-1$ (or if
$a_1=-2$ and $a_2=1$) then the isotopy shown by
Figure~\ref{fig:simplify} reduces the number of strands by one.
\begin{figure}
\centering
\includegraphics{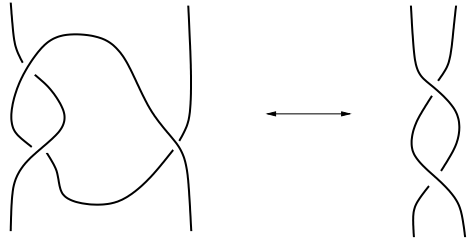}
\caption{ {\bf The isotopy above shows that $(2,-1)$ in any string
    $(a_1, \ldots , a_n)$ defining the pretzel knot $P(a_1, \ldots ,
    a_n)$ can be replaced by $(-2)$.}}
\label{fig:simplify} 
\end{figure}
For this reason, in the following we will always
assume that $\{ 1, -1\}, \{2 ,-1\}$ and $\{ -2,1\}$ are not subsets of
$\{ a_i\}_{i=1}^n$. Furthermore we will always assume that $a_i\neq
0$, since when $a_1=0$, the knot $P$ is the connected sum of
alternating torus knots, and for connected sums the conjecture has
already been verified~\cite{connectedsum}. In a similar manner, we
will always assume that $n\geq 3$, since two-strand pretzel knots are
(alternating) torus knots, and for those the conjecture is known to
hold true. 

The paper is organized as follows.  In Section~\ref{sec:Obstructions}
we collect some obstructions stemming from the Alexander and Jones
polynomials for knots to support purely cosmetic surgeries.  In
Section~\ref{sec:KnotFloer} we observe that pretzel knots have (knot
Floer homology) thickness at most one.  In Section~\ref{sec:genera}
some background regading Seifert genera of pretzel knots is given.
(In the light of a recent result of Hanselman~\cite{Hanselman} to be
discussed later, Seifert genera are of central importance in deriving
statements regarding cosmetic surgeries.)  In
Section~\ref{sec:generalStrands} we deal with $n$-strand pretzel knots
with $n\neq 5$, and in Section~\ref{sec:FiveStrand} we deal with
five-strand pretzel knots and complete the proof of
Theorem~\ref{thm:main1}. We include a short Appendix providing a
computational scheme for the Jones polynomial of some pretzel knots.

\bigskip

{\bf {Acknowledgements}}: AS was partially supported by the
\emph{\'Elvonal (Frontier) project} of the NKFIH (KKP126683).  ZSz was
partially supported by NSF Grants DMS-1606571 and DMS-1904628.  The
second author would like to thank Konstantinos Varvarezos for helpful
discussions.

\section{Obstructions for purely cosmetic surgeries}
\label{sec:Obstructions}


A general result of Ni-Wu~\cite[Theorem~1.2]{NiWu}
provides strong constraints on the surgery coefficients potentially
providing cosmetic surgeries.
\begin{thm}[Ni-Wu]
\label{thm:NiWuMain}
  Suppose that $K\subset S^3$ is a nontrivial knot and for $r,s\in \Q$
  we have that $S^3_r(K)$ and $S^3_s(K)$ are orientation preserving
  diffeomorphic. Then $s=-r$ and if $r=\frac{p}{q}$ with $p,q>0$ relatively
  prime integers, then $q^2\equiv -1 \pmod{p}$. \qed
  \end{thm}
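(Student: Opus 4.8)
This is \cite[Theorem~1.2]{NiWu}; I outline the Heegaard--Floer argument one would use to establish it. Write $r=p/q$ and $s=p'/q'$ in lowest terms with $q,q'>0$. The first step is homological: an orientation-preserving diffeomorphism induces an isomorphism $H_1(S^3_r(K);\Z)\xrightarrow{\ \cong\ }H_1(S^3_s(K);\Z)$, and since $H_1(S^3_{p/q}(K);\Z)\cong\Z/p$ this gives $|p|=|p'|$, so I would normalize to $p=|p'|>0$ and dispatch the degenerate values $p\in\{0,1\}$ directly by classical invariants. The diffeomorphism also preserves the linking form on $\Z/p$, which for $S^3_{p/q}(K)$ agrees with that of the lens space $L(p,q)$; this significantly constrains $q'$ modulo $p$.

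Next I would pin the slopes down to $s=-r$ using the Casson--Walker invariant $\lambda$, via Walker's surgery formula
\begin{equation*}
\lambda\bigl(S^3_{p/q}(K)\bigr)=\lambda\bigl(L(p,q)\bigr)+\frac{q}{2p}\,\Delta_K''(1).
\end{equation*}
An orientation-preserving diffeomorphism forces the left-hand sides for $r$ and $s$ to coincide. Feeding in the homological restriction above and the sign behaviour $\lambda(-Y)=-\lambda(Y)$ of the lens-space term, the only compatible possibility is $p'=p$ and $q'\equiv -q$, i.e.\ $s=-r$; this reduction is essentially the classical Boyer--Lines argument. It is convenient to recall that the sum of the correction terms over $\Spin^c$ structures is closely tied to $\lambda$, which links this step to the next.

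The congruence $q^2\equiv -1\pmod p$ is the heart of the matter, and I would extract it from the correction terms ($d$-invariants). By the Ozsv\'ath--Szab\'o rational surgery formula, for $0\le i<p$ one has schematically
\begin{equation*}
d\bigl(S^3_{p/q}(K),i\bigr)=d\bigl(L(p,q),i\bigr)-2\,V_{\min\{\lfloor i/q\rfloor,\ \lceil (p-i)/q\rceil\}}(K),
\end{equation*}
where the $V_j(K)\ge 0$ are non-increasing and eventually vanish; the analogous formula for the negative surgery comes from $S^3_{-p/q}(K)=-S^3_{p/q}(m(K))$ together with $d(-Y,\cdot)=-d(Y,\cdot)$. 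An orientation-preserving diffeomorphism $S^3_r(K)\cong S^3_{-r}(K)$ matches the two families of $d$-invariants through a bijection $\sigma$ of $\Spin^c$ structures. Since every $V_j$ is non-negative, this matching yields two-sided inequalities between $d(L(p,q),i)$ and $-d(L(p,q),\sigma(i))$; combined with the equality of the averaged correction terms from the Casson--Walker step, the $V_j$-contributions are forced to cancel, leaving $d(L(p,q),i)=-d(L(p,q),\sigma(i))$ for all $i$. This is exactly an orientation-preserving diffeomorphism $L(p,q)\cong -L(p,q)$, which by the $d$-invariant (equivalently Reidemeister torsion) classification of lens spaces holds precisely when $q^2\equiv -1\pmod p$.

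The step I expect to be the main obstacle is the $\Spin^c$-bookkeeping in this last paragraph: one must fix compatible identifications of $\Spin^c(S^3_{p/q}(K))$ and $\Spin^c(L(p,q))$ with $\Z/p$, control exactly how the diffeomorphism permutes them, and verify that the non-negativity inequalities together with the averaged identity genuinely force the $V_j$-terms to cancel rather than merely being consistent. The other delicate point is the degenerate range of small $p$ (and $p=0$), where the Floer input weakens and one must argue separately with the Casson--Walker invariant and the linking form.
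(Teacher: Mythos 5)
The paper itself contains no proof of this statement: it is imported verbatim from Ni--Wu \cite[Theorem~1.2]{NiWu} and stated with a reference in place of an argument, so your proposal can only be measured against Ni--Wu's published proof. Your outline does identify the correct source and much of the correct toolkit: the homological step, the relation $S^3_{-p/q}(K)=-S^3_{p/q}(m(K))$ together with $d(-Y,\mathfrak{s})=-d(Y,\mathfrak{s})$, and the correction-term surgery formula you quote (your $V_{\min\{\lfloor i/q\rfloor,\lceil (p-i)/q\rceil\}}$ is equivalent to Ni--Wu's $\max\{V_{\lfloor i/q\rfloor},V_{\lceil (p-i)/q\rceil}\}$, since the $V_j$ are non-increasing) are all genuinely part of their argument.

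There are, however, genuine gaps, and the most serious one is in the step you treat as routine: deriving $s=-r$ from the Casson--Walker invariant. If $\Delta_K''(1)\neq 0$, then Boyer--Lines \cite{BoyerLines} already excludes cosmetic pairs altogether, so the only case at issue is $\Delta_K''(1)=0$; there Walker's surgery formula reduces your hypothesis to $\lambda(L(p,q))=\lambda(L(p,q'))$, an equality of Dedekind sums. This cannot single out $-q$: it is also satisfied whenever $q'\equiv q^{-1}\pmod p$ (indeed $L(p,q)\cong L(p,q^{-1})$ preserving orientation, which is also compatible with your linking-form constraint), and in any case a congruence modulo $p$ is strictly weaker than what $s=-r$ means, namely the \emph{integer} equality $q'=-q$ --- note that $q'$ and $q'+p$ lie in the same residue class but define different slopes. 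This also mischaracterizes Boyer--Lines, whose result is an obstruction, not a determination of slopes; in Ni--Wu this step requires input from the Heegaard Floer surgery formula itself and cannot come from $\lambda$ alone. The second gap is the one you flag but then assume away: the matching of $d$-invariants only gives the one-sided inequalities $d(L(p,q),i)+d(L(p,q),\sigma(i))=2(\text{$V$-terms})\geq 0$, and to force the $V$-terms to vanish you need the reverse inequality for $\sum_i d(L(p,q),i)$; your proposed source (the ``averaged'' Casson--Walker identity) does not supply it, because the relation between $\lambda(Y)$ and $\sum_{\mathfrak{s}}d(Y,\mathfrak{s})$ carries the additional unknown $\chi(HF_{\mathrm{red}}(Y))$. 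Finally, even granting $\sum_i d(L(p,q),i)=0$, the efficient endgame is not an appeal to a ``$d$-invariant classification of lens spaces'' but the classical fact that the Dedekind sum $s(q,p)$ vanishes if and only if $q^2\equiv-1\pmod p$. In short, your outline is a fair road map of Ni--Wu's strategy, but as written the argument fails exactly at the points where the real content of their theorem lies.
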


The Casson-Walker invariants of the three-manifolds $S^3_r(K)$ and
$S^3_{-r}(K)$ can be shown to be different (hence distinguish these
oriented three-manifolds) provided the Alexander polynomial $\Delta
_K(t)$ of $K$ satisfies a certain condition. More precisely, $\Delta
_K(t)$ provides the following obstruction for $K$ to admit purely
cosmetic surgeries.
  
  \begin{thm}(\cite[Proposition~5.1]{BoyerLines})
\label{thm:BoyerLines}
    If $K\subset S^3$ admits purely cosmetic surgeries, then for the
    Alexander polynomial $\Delta _K (t)$ we have $\Delta ''_K(1)=0$.\qed
  \end{thm}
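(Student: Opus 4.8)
The plan is to distinguish the oriented three-manifolds $S^3_r(K)$ and $S^3_s(K)$ by means of the Casson--Walker invariant $\lambda$, which is an invariant of the oriented diffeomorphism type of a rational homology sphere and which changes sign under orientation reversal, $\lambda(-M)=-\lambda(M)$. If $K$ admits purely cosmetic surgeries then by the Ni--Wu result (Theorem~\ref{thm:NiWuMain}) we may assume $s=-r$, and, writing $r=\tfrac{p}{q}$ in lowest terms with $p,q>0$, that $q^2\equiv -1 \pmod p$. Since an orientation-preserving diffeomorphism $S^3_r(K)\cong S^3_{-r}(K)$ forces $\lambda(S^3_r(K))=\lambda(S^3_{-r}(K))$, it suffices to show that this equality already implies $\Delta''_K(1)=0$.

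The main tool is Walker's rational surgery formula, which expresses $\lambda$ of a surgery on a knot as a knot-dependent term isolating $\Delta''_K(1)$ plus a term depending only on the slope. Concretely it takes the form
\[
\lambda\bigl(S^3_{p/q}(K)\bigr)=\frac{q}{2p}\,\Delta''_K(1)+\lambda\bigl(L(p,q)\bigr),
\]
where $L(p,q)=S^3_{p/q}(U)$ is the lens space obtained by the same surgery on the unknot $U$, and $\lambda(L(p,q))$ is given by an explicit Dedekind sum. I would first record two features of this formula: the knot term $\tfrac{q}{2p}\Delta''_K(1)$ reverses sign when the slope is replaced by its negative, and the lens-space term transforms as $\lambda(S^3_{-r}(U))=\lambda(-S^3_r(U))=-\lambda(L(p,q))$, since mirroring the unknot does nothing while reversing the slope reverses the orientation.

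Applying the formula to the two slopes $r$ and $-r$ and subtracting, the equality $\lambda(S^3_r(K))=\lambda(S^3_{-r}(K))$ becomes
\[
\frac{q}{p}\,\Delta''_K(1)=-2\,\lambda\bigl(L(p,q)\bigr).
\]
Here the congruence $q^2\equiv -1\pmod p$ enters decisively: it is exactly the classical condition under which $L(p,q)$ admits an orientation-reversing self-diffeomorphism, i.e. $L(p,q)\cong -L(p,q)$. Amphichirality together with $\lambda(-M)=-\lambda(M)$ forces $\lambda(L(p,q))=0$; equivalently, the Dedekind sum $s(q,p)$ vanishes because $q^{-1}\equiv -q\pmod p$ gives $s(q,p)=s(q^{-1},p)=s(-q,p)=-s(q,p)$. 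Substituting $\lambda(L(p,q))=0$ into the displayed identity yields $\Delta''_K(1)=0$, as claimed.

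The main obstacle is not conceptual but bookkeeping: one must fix a single normalization convention for $\lambda$ and verify the exact constants and signs in the surgery formula, in particular that the lens-space contribution is genuinely odd under orientation reversal and that its vanishing follows from $q^2\equiv -1\pmod p$. Once these normalizations are pinned down, the argument is the two-line computation above; the only inputs beyond the surgery formula are the Ni--Wu constraints, which both reduce us to the slope pair $(r,-r)$ and supply the congruence that annihilates the lens-space term.
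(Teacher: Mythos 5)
Your proof is correct, and it uses exactly the invariant the paper has in mind (the sentence preceding Theorem~\ref{thm:BoyerLines} names the Casson--Walker invariant), but it is not the argument of the cited source: the paper offers no proof of its own, deferring entirely to Boyer--Lines, and their original argument predates the Ni--Wu theorem by decades, so it cannot take your shortcut. Boyer--Lines work directly with an arbitrary pair of distinct slopes $p/q_1$, $p/q_2$ (equality of $|H_1|$ forces the same $p$) and extract the conclusion from their surgery formula together with the constraints that an orientation-preserving homeomorphism places on the lens-space/Dedekind-sum terms. You instead invoke Theorem~\ref{thm:NiWuMain} to reduce immediately to the slope pair $\{r,-r\}$ with $q^2\equiv-1\pmod p$, whereupon the lens-space contribution vanishes outright --- by amphichirality of $L(p,q)$, or equivalently by the Dedekind-sum symmetries $s(q^{-1},p)=s(q,p)$ and $s(-q,p)=-s(q,p)$ --- leaving $\frac{q}{p}\Delta_K''(1)=0$. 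This is sound and non-circular, since Ni--Wu prove their theorem by Heegaard Floer methods that make no use of Casson--Walker invariants or of Boyer--Lines; within this paper, where Theorem~\ref{thm:NiWuMain} is already quoted, it is a legitimate and notably shorter proof. What you give up is independence and generality: your argument is conditional on a much deeper, much later theorem, whereas Boyer--Lines handle any cosmetic pair without knowing $s=-r$. Finally, the normalization issue you flag is genuinely harmless: the two facts you need --- that the knot term is odd in the slope, and that the lens-space term equals $\lambda$ of the same surgery on the unknot and hence is odd under slope reversal --- follow from applying the surgery formula to the unknot itself together with $\lambda(-M)=-\lambda(M)$, independently of the chosen conventions.
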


  Here $\Delta _K(t)$ is defined by the skein relation 
  \begin{equation}\label{eq:skein}
  \Delta _{L_+}(t)-\Delta _{L_-}(t)=(t^{\frac{1}{2}}-t^{-\frac{1}{2}})\Delta _{L_0}(t)
  \end{equation}
  with $(L_+, L_-, L_0)$ forming a usual skein triple, and $\Delta$
  being normalized to 1 on the unknot. (Then $\Delta _K$ satisfies
  that $\Delta _K(1)=1, \Delta _K'(1)=0$ and $\Delta _K
  (t^{-1})=\Delta _K (t)$.) Indeed, this obstruction can be
  conveniently reformulated in terms of the Conway polynomial $\nabla
  _K(z)$ of $K$, where $\nabla _K$ can be described by the identity
  \[
  \nabla _K(t^{\frac{1}{2}}-t^{-\frac{1}{2}})=\Delta _K(t).
  \]
  In fact, the Conway polynomial can also be defined by a skein
  relation:
  \[
  \nabla _{L_+}(z)-\nabla _{L_-}(z)=z\nabla _{L_0}(z)
  \]
  for the skein triple $(L_+,L_-, L_0)$, normalized as 1 on the unknot.
  For a knot $K$, we have that $\nabla _K(z)=1+ \sum _{i=1}^d a_{2i}(K)z^{2i}$,
  and it is easy to see that $2a_2(K)=\Delta _K ''(1)$.
  For a two-component (oriented) link $L=K_1\cup K_2$ we have that
  $\nabla _L(z)=\sum _{i=0}^da_{2i+1}(L)z^{2i+1}$, and $a_1(L)=\ell k (K_1, K_2)$,
  the linking number of the two components, cf. \cite[Proposition~8.7]{Lick}.
  
  The three-manifold invariant $\lambda _2$ discussed in
  \cite{Lescop}, together with the surgery formula of
  \cite[Theorem~7.1]{Lescop} for $\lambda _2(S^3_r(K))$ in terms of
  the knot invariant $w_3(K)$ also provides an obstruction for
  cosmetic surgeries, leading to the following result:

  \begin{thm}(\cite[Proposition~3.4]{IchiharaWu})
    \label{thm:w3application}
  Suppose that $K\subset S^3$ is a knot with $a_2(K)=0$ and $p,q$ are
  postive integers with $q^2\equiv -1 \pmod{p}$. Then $\lambda _2
  (S^3_{\frac{p}{q}}(K))=\lambda _2( S^3_{-\frac{p}{q}}(K))$ if and only if
  $w_3(K)=0$. \qed
  \end{thm}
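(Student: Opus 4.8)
The plan is to read the obstruction directly off Lescop's surgery formula \cite[Theorem~7.1]{Lescop}, which expresses $\lambda_2(S^3_{p/q}(K))$ as an explicit function of the slope together with finitely many finite-type invariants of $K$, the relevant ones here being $a_2(K)$ and $w_3(K)$. First I would record this formula for the slope $p/q$ and for the slope $-p/q$ (i.e.\ $p/(-q)$ with $p>0$ fixed), and subtract the two. In the difference every contribution that is even in $q$ cancels, so what survives is governed by the odd-in-$q$ part of the formula. This should collapse to an expression of the shape
\[
\lambda_2(S^3_{p/q}(K))-\lambda_2(S^3_{-p/q}(K)) = A(p,q)\,a_2(K)+B(p,q)\,w_3(K),
\]
where $A(p,q)$ and $B(p,q)$ are explicit rational functions of $p$ and $q$.

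With this in hand, the hypothesis $a_2(K)=0$ kills the first term and reduces the claim to the assertion that $B(p,q)\neq 0$: granting $B(p,q)\neq 0$, the left-hand side vanishes if and only if $w_3(K)=0$, which is exactly the desired equivalence. So the crux is to extract $B(p,q)$ from Lescop's formula and prove it is nonzero under the standing hypotheses $p,q>0$ and $q^2\equiv -1\pmod p$. I expect the congruence to enter precisely here: the natural form of $B(p,q)$ is likely to carry a factor such as $\tfrac{q^2+1}{p}$, which the congruence $q^2\equiv -1\pmod p$ turns into a positive integer, thereby guaranteeing $B(p,q)\neq 0$ and ruling out any accidental cancellation.

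As a cross-check on the bookkeeping, the same output can be obtained from $S^3_{-p/q}(K)=-S^3_{p/q}(m(K))$ combined with the transformation rules for $\lambda_2$ under orientation reversal and for the knot invariants under mirroring: since $a_2(m(K))=a_2(K)$ while $w_3(m(K))=-w_3(K)$, setting $a_2(K)=0$ again isolates a multiple of $w_3(K)$. Agreement of the two routes would confirm the signs and pin down the identification of $B(p,q)$.

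The main obstacle is this non-vanishing step. Lescop's formula contains several terms, including arithmetic (Dedekind-sum type) contributions, and the delicate point is to verify that, after the cancellations forced by the subtraction and by $a_2(K)=0$, the surviving coefficient of $w_3(K)$ is genuinely nonzero for every admissible pair $(p,q)$. Careful tracking of signs and of the even/odd behavior of each term under $q\mapsto -q$ is the part most prone to error, and it is the arithmetic hypothesis $q^2\equiv -1\pmod p$ that ultimately prevents $B(p,q)$ from vanishing.
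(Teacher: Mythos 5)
First, note that the paper offers no proof of this statement: it is quoted with a \qed{} directly from \cite[Proposition~3.4]{IchiharaWu}, so the relevant comparison is with the argument given there, which---like your plan---runs through Lescop's surgery formula. Your overall strategy is therefore the right one, but your sketch has a genuine gap, and it sits exactly where you guessed the arithmetic hypothesis enters. Lescop's formula contains, besides the knot-dependent terms, a contribution depending only on the slope: the value of $\lambda_2$ on $S^3_{p/q}(U)=L(p,q)$, a Dedekind-sum-type quantity. Your claimed identity
\[
\lambda_2(S^3_{p/q}(K))-\lambda_2(S^3_{-p/q}(K)) \;=\; A(p,q)\,a_2(K)+B(p,q)\,w_3(K)
\]
silently drops the term $\lambda_2(L(p,q))-\lambda_2(L(p,-q))$, which is \emph{not} disposed of by the observation that ``even-in-$q$ contributions cancel'': a priori it need not vanish. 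This is precisely where the congruence is used in the cited proof: by the classification of lens spaces, $L(p,-q)=-L(p,q)$ is orientation-preservingly diffeomorphic to $L(p,q)$ exactly when $-q\equiv q^{\pm 1}\pmod{p}$, i.e.\ (for $p>2$) when $q^2\equiv -1 \pmod{p}$, and it is this diffeomorphism that forces the two lens-space values of $\lambda_2$ to agree and the knot-independent terms to cancel.

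Conversely, the step you single out as ``the main obstacle''---the nonvanishing of $B(p,q)$---is trivial and needs no arithmetic input at all: in Lescop's formula the coefficient of $w_3(K)$ is $q/p$, and once $a_2(K)=0$ every other knot-dependent contribution disappears from the difference (each such term either carries a factor of $a_2(K)$ or is even in $q$), so $B(p,q)=2q/p\neq 0$ for positive $p,q$; there is no factor of the form $\tfrac{q^2+1}{p}$ in it. The correct proof is thus: with $a_2(K)=0$, $\lambda_2(S^3_{\pm p/q}(K))=\pm\tfrac{q}{p}\,w_3(K)+\lambda_2(L(p,\pm q))$, the lens-space terms agree by the congruence, and the difference $\tfrac{2q}{p}\,w_3(K)$ vanishes if and only if $w_3(K)=0$. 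Your mirror-image cross-check is a sensible idea, but it too is incomplete as stated, since it requires the (unstated) behavior of $\lambda_2$ under orientation reversal; had you pinned that down, it would have revealed that the delicate point is the knot-independent term, not the coefficient of $w_3(K)$.
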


  The invariant $w_3(K)$ satisfies the following crossing change formula:
  if $(K_+,K_-, K'\cup K'')$ is a skein triple involving two knots
  $K_{\pm}$ and the two-component link $K'\cup K''$, then
  \[
  w_3(K_+)-w_3(K_-)=\frac{1}{2}(a_2(K')+a_2(K''))-\frac{1}{4}(a_2(K_+)+
  a_2(K_-)+\ell k ^2 (K',K'')),
  \]
  where (as usual) $\ell k (K',K'')$ is the linking number of the two
  (oriented) knots $K',K''$.

  \begin{rem}
    Indeed, both knot invariants above can be conveniently presented in terms of
    the Jones polynomial $V_K(t)$ of the knot $K$. (Here we consider
    the Jones polynomial satisfying the skein relation
    $t^{-1}V_{L_+}(t)-tV_{L_-}(t)=(t^{\frac{1}{2}}-t^{-\frac{1}{2}})V_{L_0}(t),$
    normalized as 1 on the unknot.) Indeed, since $6a_2(K)=3\Delta ''_K(1)=
    -V''_K(1)$ and by \cite[Lemma~2.2]{IchiharaWu} 
\[
w_3(K)=\frac{1}{72}V_K'''(1)+\frac{1}{24}V''_K(1)
\]
 holds, the above obstructions can be summarized as was done in
 \cite[Theorem~1.1]{IchiharaWu}: if $K \subset S^3$ admits
 purely cosmetic surgeries then
 $V_K''(1)=0$ and $V_K'''(1)=0$.
  \end{rem}

\section{Knot Floer homology of pretzel knots}
\label{sec:KnotFloer}

Heegaard Floer homology can be used in more than one way to
verify that a knot satisfies PCSC. The concordance invariant
$\tau$ (introduced in \cite{OSzGenus}) provides the following
obstruction:

\begin{thm}(\cite[Theorem~1.2(c)]{NiWu})
  \label{thm:niwu}
  If the tau-invariant $\tau (K)$ of the knot $K\subset S^3$ derived
  from knot Floer homology is not equal to 0, then $K$ satisfies PCSC.\qed
  \end{thm}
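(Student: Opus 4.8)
The plan is to obstruct purely cosmetic surgeries through the Heegaard Floer correction terms ($d$-invariants) of the surgered manifolds, following the strategy of Ni--Wu. First I would invoke Theorem~\ref{thm:NiWuMain} to reduce to the case where the two slopes are $r=p/q$ and $s=-p/q$ with $p,q>0$ coprime and $q^2\equiv -1 \pmod p$; it then suffices to show that $S^3_{p/q}(K)$ and $S^3_{-p/q}(K)$ are not orientation-preservingly diffeomorphic when $\tau(K)\neq 0$. An orientation-preserving diffeomorphism carries spin$^c$ structures to spin$^c$ structures preserving $d$-invariants, so it would force the equality of multisets $\{d(S^3_{p/q}(K),\mathfrak s)\}=\{d(S^3_{-p/q}(K),\mathfrak s)\}$; in particular the total sums over all spin$^c$ structures must agree. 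Passing to these sums is what lets me avoid tracking how the hypothetical diffeomorphism permutes spin$^c$ structures.

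Next I would feed in two ingredients. On one hand $S^3_{-p/q}(K)=-S^3_{p/q}(m(K))$, so $d(S^3_{-p/q}(K),\mathfrak s)=-d(S^3_{p/q}(m(K)),\mathfrak s)$, converting the summed equality into a relation between $K$ and its mirror $m(K)$. On the other hand, the Ozsv\'ath--Szab\'o rational surgery formula expresses
\[
d(S^3_{p/q}(K),i)=d(L(p,q),i)-2\max\{V_{\lfloor i/q\rfloor}(K),\, H_{\lfloor (i-p)/q\rfloor}(K)\},
\]
where $L(p,q)=S^3_{p/q}(U)$ and the local $h$-invariants $V_\bullet,H_\bullet$ are non-negative integers of $K$.

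The crucial point is that the congruence $q^2\equiv -1\pmod p$ is exactly the condition under which $L(p,q)$ is amphichiral, i.e. $L(p,q)\cong -L(p,q)$ as oriented manifolds; hence $\sum_i d(L(p,q),i)$ equals its own negative and therefore vanishes. Summing the surgery formula over all spin$^c$ structures and combining with the multiset equality, the lens-space terms cancel, and I expect to arrive at $A(K)+A(m(K))=0$, where
\[
A(K)=\sum_i \max\{V_{\lfloor i/q\rfloor}(K),\, H_{\lfloor (i-p)/q\rfloor}(K)\}\ \ge\ 0.
\]
Non-negativity then forces $A(K)=A(m(K))=0$, and reading off the $i=0$ term gives $V_0(K)=V_0(m(K))=0$.

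Finally I would translate this back into $\tau$. Since $V_0(K)=0$ is equivalent to $\nu^+(K)=0$ and $\tau(K)\le \nu^+(K)$, we obtain $\tau(K)\le 0$; applying the same reasoning to $m(K)$ yields $\tau(m(K))\le 0$, i.e. $\tau(K)\ge 0$, so $\tau(K)=0$. Taking the contrapositive proves the theorem. The main obstacle I anticipate is not the surgery-formula bookkeeping but rather pinning down the precise chain $\tau(K)\neq 0 \Rightarrow V_0(K)\neq 0$ through $\nu^+$ with the correct sign conventions, together with cleanly deducing the amphichirality of $L(p,q)$ from the number-theoretic congruence $q^2\equiv -1 \pmod p$; once these two facts are secured, the non-negativity argument closes the proof.
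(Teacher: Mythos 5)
Your proposal is correct: the paper does not prove this statement at all (it is quoted from Ni--Wu with a \qed), and your argument is essentially a faithful reconstruction of Ni--Wu's original proof --- summing $d$-invariants over all spin$^c$ structures to sidestep the unknown bijection, using the rational surgery formula, the vanishing of $\sum_i d(L(p,q),i)$ via amphichirality of $L(p,q)$ when $q^2\equiv -1 \pmod p$, and the mirror relation to force $V_0(K)=V_0(m(K))=0$. The only cosmetic difference is the final step, where you pass through the Hom--Wu invariant $\nu^+$ (so $\tau(K)\le\nu^+(K)=0$ and likewise for the mirror), whereas Ni--Wu appeal to the earlier bound relating $V_0$ to $\tau$ (via Rasmussen's local $h$-invariants); both close the argument correctly.
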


The hat version of knot Floer homology (over the field $\Field$ of
two elements) of a knot $K\subset S^3$ is a finite dimensional
bigraded vector space $\HFKa (K)=\sum _{M,A} \HFKa _M (K, A)$. By
collapsing the two gradings to $\delta =A-M$, we get the
$\delta$-graded invariant ${\HFKa }^{\delta}(K)$.
\begin{defn}
  The \defin{thickness} $th(K)$ of the knot $K\subset S^3$
  is the maximal value of the difference
  $\vert \delta (x)-\delta (x')\vert$ for homogeneous elements
  $x,x'\in {\HFKa }^{\delta }(K)$. In particular, if $th (K)=0$ then
  $K$ is called \defin{thin}.
    \end{defn}

Examples of thin knots are provided by alternating knots, where the
difference $A-M$ of a homogeneous element is equal to half the
negative of the signature of the knot.

Work of Hanselman~\cite{Hanselman} regarding PCSC is crucial in
our discussions.  In particular, a direct consequence of
\cite[Theorem~2]{Hanselman} is
\begin{cor}[Hanselman~\cite{Hanselman}]
  \label{cor:HanselmanThickness}
  If a nontrivial knot $K\subset S^3$ has thickness
  $th(K)\leq 5$ and $g(K)\neq 2$, then  PCSC holds for $K$.
  \end{cor}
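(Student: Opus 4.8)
The plan is to obtain the statement as a specialization of Hanselman's \cite[Theorem~2]{Hanselman}, whose content I would first unwind explicitly. That theorem asserts that a nontrivial knot $K$ admitting a purely cosmetic surgery falls into one of two mutually exclusive classes: either $g(K)=2$ with surgery slope $\pm 2$, or the surgery slope is $\pm 1/q$ for a positive integer $q$ satisfying a quantitative bound of the shape $q\le f\bigl(th(K),g(K)\bigr)$, where the admissible range for $q$ shrinks as $g(K)$ increases and grows only linearly in $th(K)$. Accordingly, I would argue by contradiction: suppose $K$ is nontrivial, $th(K)\le 5$, $g(K)\neq 2$, and yet $K$ admits a purely cosmetic surgery, and then eliminate each of the two alternatives provided by Hanselman's theorem.

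The first alternative, $g(K)=2$ with slope $\pm 2$, is excluded at once by the hypothesis $g(K)\neq 2$, so the entire burden rests on the second alternative. For that case I would substitute $th(K)\le 5$ into Hanselman's inequality and read off the resulting constraint on $q$; the claim is precisely that, under $th(K)\le 5$ and $g(K)\neq 2$, the bound $f\bigl(th(K),g(K)\bigr)$ admits no positive integer $q$, so no slope $\pm 1/q$ survives. This is the numerical heart of the deduction: the threshold value $th(K)=5$ is exactly the one beyond which the two-sided constraints on $q$ coming from Hanselman's estimate become incompatible once the genus-two case is removed.

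The step I expect to be the main obstacle is confirming that the arithmetic really does close off \emph{every} value $g(K)\neq 2$, and not merely the large-genus range: Hanselman's bound is tightest (and the admissible window for $q$ widest) at the low-genus end, so the delicate verification is at small $g(K)$ and small $q$, i.e.\ the nearly integral slopes. Should the thickness estimate fail to be decisive for a finite residual list of candidate pairs $(g(K),q)$, I would supplement it with the obstructions of Section~\ref{sec:Obstructions}: Ni--Wu (Theorem~\ref{thm:NiWuMain}) forces the slopes to be $r$ and $-r$, while the Casson--Walker obstruction (Theorem~\ref{thm:BoyerLines}) and the $\lambda_2$ obstruction via $w_3$ (Theorem~\ref{thm:w3application}) exclude the remaining slopes unless the corresponding Alexander and Jones polynomial identities hold. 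In practice, however, I expect the specialization of Hanselman's bound alone to suffice, so that the proof reduces to recording his exact constants and checking the inequality.
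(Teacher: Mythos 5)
There is a genuine gap: your argument never disposes of the case $g(K)=1$. Hanselman's bound, which in explicit form reads
\[
q\;\leq\;\frac{th(K)+2g(K)}{2g(K)\bigl(g(K)-1\bigr)},
\]
has the factor $g(K)-1$ in the denominator, so at $g(K)=1$ it imposes no constraint whatsoever on $q$ (the ``admissible window'' is all of $\mathbb{Z}_{>0}$, not a small finite list). Your plan --- substitute $th(K)\leq 5$ and check that no positive integer $q$ survives --- works precisely when $g(K)\geq 3$ (where one gets $q\leq\frac{11}{12}<1$, the bound being decreasing in $g$), but it cannot ``close off'' $g(K)=1$, and the hypothesis $g(K)\neq 2$ does not exclude genus one. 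The paper fills exactly this hole by invoking Wang's theorem (Theorem~\ref{thm:wang}): genus-one knots satisfy PCSC, so combining Wang with Hanselman one first gets $g(K)>1$, and only then does $g(K)\neq 2$ force $g(K)\geq 3$ and make the numerical estimate decisive.

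Your proposed fallback --- supplementing the estimate with the obstructions of Section~\ref{sec:Obstructions} for a ``finite residual list'' of pairs $(g(K),q)$ --- does not repair this, for two reasons. First, the residual set at $g(K)=1$ is not finite: every slope $\pm 1/q$ remains admissible. Second, Theorems~\ref{thm:BoyerLines} and~\ref{thm:w3application} are conditions on the Alexander and Jones polynomials of the particular knot, not on the pair $(g(K),q)$; there are genus-one knots with $\Delta''_K(1)=0$ and $w_3(K)=0$, which is exactly why Wang's genus-one result is a substantive theorem rather than a corollary of the polynomial obstructions. Aside from this missing ingredient, your treatment of the remaining cases coincides with the paper's proof.
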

\begin{proof}
  By the result of Wang~\cite{Wang} (see Theorem~\ref{thm:wang}), together
  with~\cite[Theorem~2]{Hanselman} of Hanselman, the orientation-preserving
  diffeomorphism $S^3_s(K)\cong S^3_r(K)$ for a nontrivial knot $K$
  and $r\neq s$ implies that $g(K)>1$ and 
  \begin{itemize}
  \item   either $\{ r,s\}=\{\pm 2\}$ and $g(K)=2$, or
    \item $\{ r, s\}=\{ \pm \frac{1}{q}\}$ for some positive integer $q$
     which  satisfies $q\leq \frac{th(K)+2g(K)}{2g(K)(g(K)-1)}$.
  \end{itemize}
  For a knot with $g(K)\neq 2$ the first option is not possible, and
  if $th(K)\leq 5$ and $g(K)\geq 3$,
  we get that the positive
  integer $q$ satisfies $q\leq \frac{11}{12}$, concluding the proof.
  \end{proof}


  \begin{prop}
    \label{prop:thickness}
    Suppose that $P=P(a_1, \ldots , a_n)$ is an $n$-strand pretzel knot.
    Then the thickness $th(P)$ of $P$ is at most 1.
\end{prop}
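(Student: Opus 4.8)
My plan is to bound $th(P)$ by the Turaev genus of the standard pretzel diagram. Recall that for a connected diagram $D$ with $c$ crossings the Turaev surface has genus $g_T(D)=\tfrac12(2+c-s_A-s_B)$, where $s_A,s_B$ count the loops in the all-$A$ and all-$B$ Kauffman states of $D$. Granting the known inequality $th(K)\le g_T(K)\le g_T(D)$ relating the $\delta$-width of $\HFKa$ to the Turaev genus (due to Lowrance, refining the analogous Khovanov bound), it suffices to prove that the standard diagram $D=D(a_1,\dots,a_n)$ of a pretzel knot satisfies $g_T(D)\le 1$, equivalently $s_A+s_B\ge c$.

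To compute $s_A$ and $s_B$ I would resolve each twist region explicitly. In the all-$A$ state a region is resolved either by being capped off at top and bottom, which creates $|a_i|-1$ interior circles, or by letting its two strands run straight through, which creates none; the sign of $a_i$ decides which, and in the all-$B$ state the two behaviours are interchanged. Hence the interior circles contribute exactly $\sum_i(|a_i|-1)=c-n$ to $s_A+s_B$, and everything else reduces to counting the ``exterior'' loops formed by the pretzel closure arcs, which depend only on the cyclic word of signs $(\mathrm{sign}(a_i))_{i=1}^n$. Tracing the closure shows that if all $a_i$ share a sign there are $n+2$ exterior loops in total (so $s_A+s_B=c+2$ and $g_T=0$, recovering that alternating pretzels are thin), whereas as soon as both signs occur the exterior count drops to exactly $n$, giving $s_A+s_B=c$ and $g_T(D)=1$.

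The crux is this last claim: for an arbitrary cyclic arrangement of signs the exterior loops must number exactly $n$, never fewer, no matter how many sign changes occur. I would organize this as a component count on the four-valent graph on the $2n$ closure arcs, in which a positive region joins consecutive arcs on each level while a negative region joins the two levels on each side; the cases $n\le 4$ already stabilize at $g_T=1$, and a block analysis (processing one maximal block of equal signs at a time) should promote this to all $n$. As a self-contained alternative that sidesteps the Turaev-genus input, I would instead induct with the $\delta$-graded unoriented skein exact triangle for $\HFKa$ applied inside a twist region: one resolution lowers $|a_i|$ by one and the other caps the region off, reducing either $\sum|a_i|$ or $n$ down to alternating pretzel links, which are thin by the remark in the text and stay thin under the split unions with unknots that arise (the extra factor $V$ being supported in a single $\delta$-grading). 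There the main difficulty is the dual one: the triangle controls the $\delta$-support only up to the bounded shifts of its connecting maps, so the real work is to verify that the three $\delta$-supports stay within two adjacent diagonals at every stage, so that the thickness does not accumulate beyond $1$ over the induction.
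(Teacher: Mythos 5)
Your main route is correct but genuinely different from the paper's. The paper argues directly with the Kauffman-state generators of the standard pretzel diagram: it computes the local $\delta$-grading contributions of bigons, the inter-strand domains and the top domain in three orientation cases (all $a_i$ odd; $a_1$ even with $n$ odd; $a_1$ even with $n$ even), and observes that every generator lands in one of two adjacent $\delta$-values, so the complex---and hence the homology---has thickness at most $1$. You instead invoke Lowrance's inequality $th(K)\le g_T(K)\le g_T(D)$ and reduce to showing $s_A+s_B\ge c$ for the standard diagram; your accounting is right: the interior circles of the twist regions contribute exactly $c-n$, and in the mixed-sign case the block analysis you outline does close up correctly (a maximal block of $j$ ``vertical'' regions contributes $j-1$ exterior loops, and each maximal block of ``capped'' regions closes into a single loop via the two flanking vertical strands, so the all-$A$ exterior count equals the number of regions of one sign, the all-$B$ count the other, totalling $n$), giving $g_T(D)=1$, while the same-sign case gives $s_A+s_B=c+2$ and $g_T(D)=0$. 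So your argument is sound, though the crux count is left as a sketch and should be written out, e.g.\ exactly as above. What the two approaches buy is different: yours is shorter but leans on a nontrivial external theorem of Lowrance, whereas the paper's computation is elementary and self-contained, and---importantly for the rest of the paper---it identifies the actual pair of $\delta$-values (e.g.\ $\frac{1}{2}(k-\ell\mp1)$ in the all-odd case), which is reused in Lemma~\ref{lem:0145} to conclude $\tau(P)\neq 0$ when $0,1,4$ or $5$ coefficients are negative; the Turaev-genus bound alone cannot deliver that refinement. Your second, ``self-contained'' alternative via the unoriented skein triangle is essentially a re-derivation of Lowrance's theorem, and the difficulty you flag there (controlling the $\delta$-shifts of the connecting maps through the induction) is real, so it should not be presented as the fallback without substantial extra work.
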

\begin{proof}
  We will show that there is a $\delta$-graded chain complex computing
  ${\HFKa }^{\delta}$ for which the thickness is at most 1, hence the
  same applies to the homologies. This chain complex is generated by
  the Kauffman states of the usual diagram of the pretzel knot
  $P=P(a_1, \ldots , a_n)$; we only need to determine the
  $\delta$-gradings of these generators.  (For the definition and
  basic properties of Kauffman states, as well as that they span a
  chain complex computing knot Floer homology, see
  \cite{OSzAlternating}.)  There are three types of domains in the
  diagram of $P$ from which the contributions should be counted:
  bigons in the strands, domains between the strands, and the 'top
  domain'. (Notice that the 'bottom domain' and the outside unbounded
  domain does not have to be considered, since these are occupied by
  the marking, which is placed on the lower arc of the diagram.)
  Since the orientation of the strands is important in these
  calculations, we distinguish three cases.  These combinatorially
  different cases (together with the markings, symbolized by a heavy
  dot) and the orientations are shown by Figure~\ref{fig:Iranyitas1}.
\begin{figure}
\centering
\includegraphics{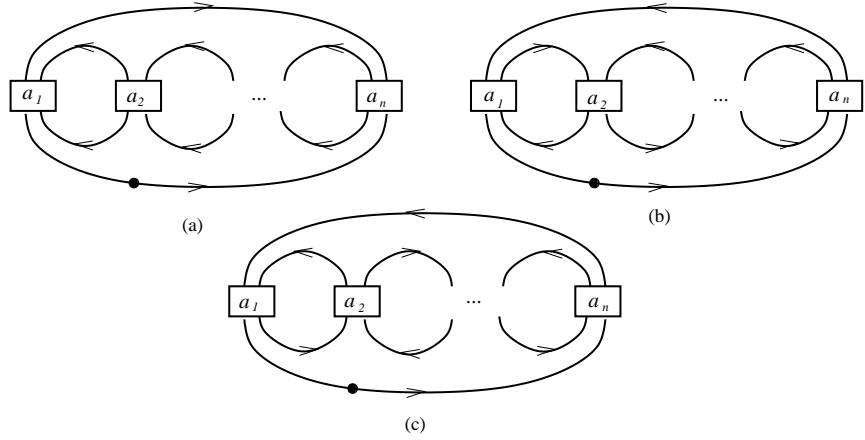}
\caption{{\bf Orientation on $P$.} The three diagrams indicate the three
  combinatorially different orientations: in (a) we show the case when
  all $a_i$ are odd (hence $n$ is odd), in (b) the case when $a_1$ is
  even and $n$ is odd, and finally in (c) the case when $a_1$ is even
  and $n$ is even. (The difference between the two last cases is the
orientation at the first strand.)}
\label{fig:Iranyitas1} 
\end{figure}

  Consider now a Kauffman state $\kappa$.
  The local contributions to $\delta$ are shown by
  Figure~\ref{fig:Contributions}; notice that the orientations of the
  strands are important in these calculations, hence the three cases
  shown by Figure~\ref{fig:Iranyitas1} should be discussed separately.
\begin{figure}
\centering
\includegraphics{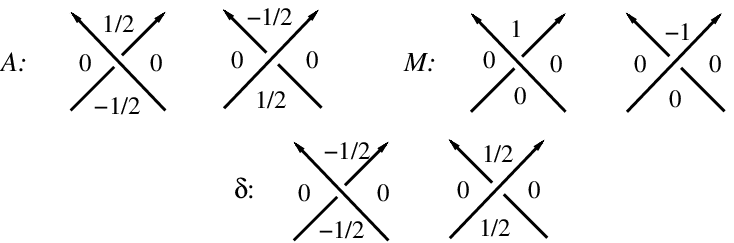}
\caption{{\bf The local contributions for $A, M$ and $\delta$ at a
    crossing.}  The Kauffman state distinguishes a corner at the
  crossing, and we take the value in that corner as a contribution of
  the crossing in $A,M$ or $\delta$ of the Kauffman state at hand.}
\label{fig:Contributions} 
\end{figure}

  \smallskip

  {\bf {Case I}}: \emph{All $a_i$ are odd.} In this case the
  orientation of $P$ can be chosen as shown by
  Figure~\ref{fig:Iranyitas1}(a). (Since $P$ is a knot, $n$ is odd.)
  The contribution of the marking of the Kauffman state $\kappa$ in
  the top domain, as well as in all bigons is 0. The domains between
  the strands, on the other hand, contribute either $\frac{1}{2}$ or
  $-\frac{1}{2}$, depending whether the marking is on the strand with
  positive or negative twisting. The fact whether the marking of such
  a domain is on the left or right strand is determined by the strand
  distinguished by the marking in the top domain. Therefore the sign
  of this distinguished strand determines how many $\frac{1}{2}$ or
  $-\frac{1}{2}$ contributions do we get.  Consequently, if there are
  $k$ negative and $\ell$ positive coefficients among the parameters
  $a_i$ of the pretzel knot $P$, the $\delta$-grading of $\kappa$ is
  either $\frac{1}{2}(k-\ell -1)$ (if the marking of the top domain is
  at a strand with negative parameter) or $\frac{1}{2}(k-\ell +1)$ (if
  the marking in the top domain is at a strand with positive
  parameter).  In conclusion there are at most two $\delta$-gradings,
  which are one apart, hence the thickness of the knot is at most 1.
  Indeed, if all $a_i$ have the same sign, then the knot is thin, in
  accordance with the fact that in that case the knot is alternating.

{\bf {Case II}}: \emph{Assume now that $a_1$ is even and $n$ is odd},
shown by the diagram of Figure~\ref{fig:Iranyitas1}(b).  In this case
the first strand (with the even parameter $a_1$) is special.  Bigons
in the first strand contribute 0, while in the other strands bigons
contribute $\pm \frac{1}{2}$ (the sign depending on the sign of the
parameter of the strand).  Consequently the bigons contribute to the
$\delta$-grading of $\kappa$ a fix value independent of the Kauffman
state, determined by the diagram only.  The top domain provides 0 if
the marking is at the first strand, and all the other domains give
further 0's. If the marking in the top domain is not at the first
strand, then its contribution is $\pm \frac{1}{2}$ (the sign depending
on the sign of the parameter), while now the domain between the first
and the second strand will have a nonzero contribution (which is again
$\pm \frac{1}{2}$, depending on the sign of $a_1$); call this
contribution $c$. Then the total contributions from the top domain and
the ones between the strands is either 0, or $-\frac{1}{2}+c$ or
$\frac{1}{2}+c$.  Since $c=\pm \frac{1}{2}$, the $\delta$-grading
still takes two possible values which are one apart, implying that
$th\leq 1$.

  {\bf {Case III}}: \emph{Finally, assume that $a_1$ is even and $n$
    is even}, cf. the diagram of Figure~\ref{fig:Iranyitas1}(c).  The
  only difference between this and the previous case is that the
  orientation along the first strand (with $a_1$ twists) is
  different. This case is similar to {\bf {Case I}}: all bigons contribute
  $\pm \frac{1}{2}$ (sign depending on the sign of the parameter of
  the strand), the top domain contributes $\pm \frac{1}{2}$ (depending
  on the fact whether the marking is on the top of a positive or a
  negative strand), while the contribution of the domains between the
  strands is all 0.  Once again, there are two possible
  $\delta$-values, which are 1 apart, verifying the claim.
\end{proof}
As a direct consequence of Corollary~\ref{cor:HanselmanThickness} we
have
\begin{cor}
\label{cor:Hcor}
  Suppose that $P=P(a_1, \ldots , a_n) $ is an $n$-strand pretzel knot.
If the Seifert genus $g(P)\neq 2$ then the purely cosmetic surgery
conjecture holds for $P$. \qed
\end{cor}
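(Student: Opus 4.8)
The plan is to read this statement off directly from the two preceding results: the universal thickness bound of Proposition~\ref{prop:thickness} together with Hanselman's consequence recorded in Corollary~\ref{cor:HanselmanThickness}. Concretely, Proposition~\ref{prop:thickness} guarantees that every $n$-strand pretzel knot $P$ satisfies $th(P)\leq 1$, so in particular $th(P)\leq 5$; the thickness hypothesis of Corollary~\ref{cor:HanselmanThickness} is therefore met automatically, with considerable room to spare.

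With the thickness condition in hand, the only remaining hypothesis of Corollary~\ref{cor:HanselmanThickness} is that the Seifert genus differ from $2$, which is exactly the standing assumption of the present statement. Thus, for any pretzel knot $P$ with $g(P)\neq 2$, both hypotheses $th(P)\leq 5$ and $g(P)\neq 2$ are satisfied, and Corollary~\ref{cor:HanselmanThickness} immediately yields that PCSC holds for $P$.

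The one formal point I would verify is the nontriviality clause built into Corollary~\ref{cor:HanselmanThickness}. Since PCSC is by definition a statement about nontrivial knots, if $P$ were the unknot there would be nothing to prove; otherwise the standing conventions on the parameters (excluding the trivializing subsets $\{1,-1\}$, $\{2,-1\}$, $\{-2,1\}$, requiring $a_i\neq 0$, and assuming $n\geq 3$) ensure that $P$ is a genuine knot, so the corollary applies. I do not anticipate any genuine obstacle: essentially all of the substantive work is already contained in the thickness computation of Proposition~\ref{prop:thickness}, and the present statement is a purely formal combination of that bound with the genus dichotomy coming from Hanselman's theorem. The real remaining content of the paper lies entirely in the excluded case $g(P)=2$, which this corollary deliberately sets aside.
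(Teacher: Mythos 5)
Your proposal is correct and coincides with the paper's own argument: the corollary is stated there precisely as a direct consequence of Proposition~\ref{prop:thickness} (which gives $th(P)\leq 1$) combined with Corollary~\ref{cor:HanselmanThickness}, exactly as you do. Your extra remark about the nontriviality clause (PCSC being vacuous for the unknot) is a reasonable tidying-up of a point the paper leaves implicit.
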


\section{Genera of pretzel knots}
\label{sec:genera}
The Seifert genera of knots play an important role in understanding
cosmetic surgeries on them.  Regarding low genus knots, the following
general result of Wang provides relevant information.
\begin{thm}(\cite[Theorem~1.3]{Wang})
\label{thm:wang}
If $g(K)=1$ for a knot $K$ then PCSC holds for $K$. \qed
\end{thm}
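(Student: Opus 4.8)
The plan is to argue by contradiction, peeling off the classical and concordance obstructions first and isolating the genuinely Floer-theoretic core. Suppose $K$ is a nontrivial knot with $g(K)=1$ admitting purely cosmetic surgeries, say $S^3_r(K)\cong S^3_s(K)$ orientation-preservingly with $r\neq s$. By Theorem~\ref{thm:NiWuMain} we may write $r=\frac{p}{q}$, $s=-\frac{p}{q}$ with $p,q>0$ coprime and $q^2\equiv-1\pmod p$. Since $|\tau(K)|\leq g(K)=1$ we have $\tau(K)\in\{-1,0,1\}$, and because PCSC holds for $K$ exactly when it holds for $m(K)$, with $\tau(m(K))=-\tau(K)$, we may assume $\tau(K)\geq0$. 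If $\tau(K)=1$ then Theorem~\ref{thm:niwu} already yields PCSC, contradicting our assumption; hence $\tau(K)=0$ from now on.

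Next I would exhaust the polynomial obstructions from Section~\ref{sec:Obstructions}. As $g(K)=1$ the Seifert form is $2\times2$ and the normalized symmetric Alexander polynomial has the shape $\Delta_K(t)=a\,t+(1-2a)+a\,t^{-1}$ with $a=a_2(K)$, so $\Delta_K''(1)=2a_2(K)$. Theorem~\ref{thm:BoyerLines} forces $\Delta_K''(1)=0$, hence $a_2(K)=0$ and $\Delta_K(t)\equiv1$. Combining $a_2(K)=0$ with $q^2\equiv-1\pmod p$ in Theorem~\ref{thm:w3application}, the equality $\lambda_2(S^3_{p/q}(K))=\lambda_2(S^3_{-p/q}(K))$ forced by the diffeomorphism gives $w_3(K)=0$, i.e. $V_K''(1)=V_K'''(1)=0$. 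At this point every finite-type obstruction recorded in Section~\ref{sec:Obstructions} is satisfied, so any contradiction must come from a finer invariant.

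The heart of the argument is a Heegaard Floer surgery computation built on the rational surgery formula of Ozsv\'ath--Szab\'o. Its correction terms express $d(S^3_{p/q}(K),i)$ as the lens space value $d(L(p,q),i)$ decreased by twice certain of the nonnegative integers $V_j(K)$. Because the $V_j$ vanish for $j\geq g_4(K)$ and drop by at most $1$, the bound $g_4(K)\leq g(K)=1$ forces $V_j(K)=0$ for $j\geq1$ and $V_0(K)\in\{0,1\}$, and likewise for $m(K)$. Using $d(S^3_{-p/q}(K),\cdot)=-d(S^3_{p/q}(m(K)),\cdot)$, I would impose the equality of correction terms demanded by the orientation-preserving diffeomorphism and read off the resulting finite system of relations among $V_0(K)$ and $V_0(m(K))$, aiming either for an immediate contradiction or for the conclusion that $V_0(K)=V_0(m(K))=0$.

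The main obstacle is the residual case $V_0(K)=V_0(m(K))=0$, where the correction terms of $S^3_{\pm p/q}(K)$ all agree with those of the corresponding surgeries on the unknot and the $d$-invariants are therefore blind to the distinction. Here the structural input is that $g(K)=1$ confines $CFK^\infty(K)$ to Alexander gradings $-1,0,1$ and, by the genus-detection theorem together with $\Delta_K\equiv1$, that $\HFKa(K,1)$ has even rank $\geq2$ (so $K$ is non-fibered, the fibered genus-one knots having $a_2\neq0$); in the thin case this extra homology organizes into square ``box'' summands. The task is then to compute how these non-acyclic pieces inject into the reduced groups $HF^+_{\mathrm{red}}(S^3_{\pm p/q}(K))$ through the mapping cone, and to show that the resulting graded $\Field[U]$-modules cannot be interchanged by the orientation reversal $r\mapsto-r$. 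Establishing this asymmetry uniformly over all admissible pairs $(p,q)$ with $q^2\equiv-1\pmod p$ --- very likely through a case analysis rather than one closed formula --- is where I expect the real difficulty to lie.
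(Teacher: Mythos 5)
This statement is not proved in the paper at all: it is quoted from Wang (\cite[Theorem~1.3]{Wang}) as an external input, stated with a qed-mark and no argument. Your proposal therefore has to stand as a self-contained proof, and as such it has a genuine gap. Your first three paragraphs are correct but only reproduce known reductions: Ni--Wu gives $s=-r=-\frac{p}{q}$ with $q^2\equiv-1\pmod p$ and (via Theorem~\ref{thm:niwu}) lets you assume $\tau(K)=0$; Boyer--Lines gives $a_2(K)=0$, hence $\Delta_K\equiv 1$ for a genus-one knot; Ichihara--Wu gives $w_3(K)=0$. The correction-term computation in your third paragraph likewise cannot produce a contradiction: with $V_1(K)=V_1(m(K))=0$ the constraints are consistent and simply land you in the case $V_0(K)=V_0(m(K))=0$, where all $d$-invariants of $S^3_{\pm p/q}(K)$ agree with those of the corresponding surgeries on the unknot. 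This is where every genus-one candidate counterexample ends up, so nothing up to that point distinguishes the two manifolds.

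The actual content of Wang's theorem is precisely the step you defer: showing that for a genus-one knot with $\Delta_K\equiv 1$ (so $\HFKa(K,1)$ has even rank at least $2$ by genus detection), the groups $HF^+_{\mathrm{red}}(S^3_{p/q}(K))$ and $HF^+_{\mathrm{red}}(S^3_{-p/q}(K))$ can never be isomorphic as graded modules --- a rank computation through the mapping cone exploiting the asymmetry of its truncation in the sign of the slope. You describe this as a ``task'' and say it is ``where I expect the real difficulty to lie,'' which is an admission that the proof is not complete: what you have is a correct assembly of the standard reductions followed by a research plan. Moreover, even the plan as sketched has a coverage hole: you invoke the ``box summand'' structure only ``in the thin case,'' but a genus-one knot with trivial Alexander polynomial need not be thin, so the envisioned case analysis would not be exhaustive as stated. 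Without carrying out the rank-asymmetry argument for all such knots and all admissible pairs $(p,q)$, the theorem is not proved.
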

  
For Seifert genera of pretzel knots, we quote three results, detailed
below.  As before, we will assume that for the pretzel knot $P(a_1,
\ldots , a_n)$ we have that $\{1,-1\}, \{ -2, 1\}$ and $\{ 2, -1\}$
are not subsets of $\{ a_i\}_{i=1}^n$.

\subsection{Three-strand pretzel knots}

\begin{thm}(Kim-Lee, \cite[Corollary~2.7]{genera-pretzel})
  \label{thm:3stcase}
  The Seifert genus $g(P(p,q,r))$ of the three-strand pretzel knot
  $P(p,q,r)$ with parameters $p,q,r\in \Z\setminus \{ 0\}$ (also
  satisfying that $\{ 1,-1\},\{ 2, -1\}$ and $\{ -2, 1\}$ are not
    subsets of $\{ p,q,r\}$) is equal to
  \begin{enumerate}
  \item 1 if all $p,q,r$ are odd,
  \item $\frac{1}{2}(\vert q\vert +\vert r\vert )$ if $p$ is even and
    $q,r$ have the same sign, and 
  \item $\frac{1}{2}(\vert q\vert +\vert r\vert -2)$ if $p$ is even and
    $q, r$ have opposite signs.\qed
  \end{enumerate}
\end{thm}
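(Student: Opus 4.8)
The plan is to compute the Seifert genus of $P(p,q,r)$ by producing an explicit Seifert surface via Seifert's algorithm and then showing that its genus is optimal, the latter being the harder half. For the upper bound I would take the standard pretzel diagram, apply Seifert's algorithm, and count Seifert circles and crossings to get the genus of the resulting surface as $g = \frac{1}{2}(c - s + 1)$ (for a knot), where $c$ is the number of crossings and $s$ the number of Seifert circles. The key point is that the number of Seifert circles depends on how the strands are oriented, which in turn depends on the parities of $p,q,r$ and, when signs differ, on cancellations. First I would separate the all-odd case from the one-even case exactly as in the parity analysis of Proposition~\ref{prop:thickness}: when all three parameters are odd, the diagram is alternating and the strands are oriented so that the twist regions are bands, yielding a genus-one surface, which gives case (1). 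When $p$ is even, the orientation along the even strand is reversed relative to the odd case (compare Figures~\ref{fig:Iranyitas1}(b) and (c)), and the $|q|+|r|$ half-twists on the odd strands become the dominant contributors to the crossing count on the Seifert surface.

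Next I would handle the sign dependence in the even case. When $q,r$ have the same sign, no half-twists cancel and Seifert's algorithm produces a surface whose genus is $\frac{1}{2}(|q|+|r|)$, giving case (2). When $q,r$ have opposite signs, I expect a Reidemeister~II-type simplification (or equivalently a pair of cancelling bands in the Seifert surface) to remove one unit from each of $|q|$ and $|r|$ effectively, lowering the count by $2$ in the genus formula and producing $\frac{1}{2}(|q|+|r|-2)$ as in case (3). This matches the general philosophy, recorded in the excerpt, that the subsets $\{1,-1\}$, $\{2,-1\}$ and $\{-2,1\}$ have been excluded precisely so that no further degenerate cancellations occur and the formulas are clean.

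The main obstacle will be the lower bound, i.e.\ proving that the surface produced by Seifert's algorithm is genus-minimizing rather than merely an upper bound. The natural tool is the knot Floer homology genus detection of Ozsv\'ath--Szab\'o, for which it suffices to exhibit a nonvanishing class in $\HFKa(P, g)$ at the top Alexander grading $g$ equal to the candidate genus. Here the thinness input from Proposition~\ref{prop:thickness} is decisive: since $th(P)\le 1$, the $\delta$-graded complex is tightly constrained, so I would identify the Kauffman state (or states) carrying the extremal Alexander grading, verify that the corresponding generator survives in homology, and read off that the top Alexander grading is nonzero. Concretely, the Alexander grading of a Kauffman state is computed from the same local contributions as in Figure~\ref{fig:Contributions}, so the extremal state can be located by the same bookkeeping used in the thickness proof. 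Alternatively one can invoke that the pretzel diagrams are alternating in the relevant subcases (all-odd, or same-sign even cases are alternating after mirroring), whence the Seifert genus equals the degree of the Alexander polynomial and the surface is automatically minimal; the opposite-sign case requires the Floer-theoretic argument since the diagram need not be alternating. Either way, the crux is confirming that the top Alexander grading is exactly the genus of the constructed surface, which is where I would expect to spend most of the effort.
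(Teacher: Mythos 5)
First, a framing remark: the paper does not prove this statement at all --- it is quoted from Kim--Lee \cite{genera-pretzel} (note the \emph{qed} attached to the statement), and the paper's own discussion before Theorem~\ref{thm:gen-pretzel} records what the actual proof uses: an explicit Seifert surface for the upper bound, and the \emph{leading coefficient} of the Conway polynomial $\nabla_P$ for the lower bound. Your attempt must therefore stand on its own, and it has two genuine gaps, both concentrated in the case where $p$ is even. For the upper bound in case (3): with $p$ even the even band is the anti-parallel one and the two odd bands are parallel, so Seifert's algorithm on the standard diagram gives $s=|p|+1$ circles, $c=|p|+|q|+|r|$ crossings, hence genus $\tfrac{1}{2}(|q|+|r|)$ \emph{regardless of the signs} of $q$ and $r$. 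Your proposed ``Reidemeister~II--type simplification'' is not an isotopy: the two odd twist regions sit side by side, so no pair of arcs crosses twice, and a move $P(p,q,r)\to P(p,q-1,r+1)$ even changes parities, hence the link type. Worse, if such a cancellation existed it could be iterated, giving genus $\tfrac{1}{2}\bigl|\,|q|-|r|\,\bigr|$, which is absurd: $P(2,3,-3)$ has $\nabla(z)=(z^2+1)^2$ and genus $2$, not $0$. Constructing a non-canonical surface of genus $\tfrac{1}{2}(|q|+|r|-2)$ in the opposite-sign case (e.g.\ by exhibiting a compression of the canonical surface) is precisely the nontrivial content of the cited theorem and cannot be waved through.

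The lower bounds are also not established. Your alternating claims are false: a mixed-sign all-odd pretzel diagram is not alternating, and in case (2) with $\mathrm{sign}(p)\neq\mathrm{sign}(q)=\mathrm{sign}(r)$ the knot itself need not be alternating ($P(-2,3,7)$ is the standard counterexample). The Floer-theoretic plan fails as stated: thickness $\leq 1$ (Proposition~\ref{prop:thickness}) gives no control over whether a Kauffman state of extremal Alexander grading survives in homology. In case (3) this is not a technicality: any generator in Alexander grading above $\tfrac{1}{2}(|q|+|r|-2)$ \emph{must} die, since that grading is the genus you are trying to prove, so ``verifying survival'' of extremal states is exactly the wrong move; and the only way to certify nonvanishing of a top group without computing differentials is its Euler characteristic, i.e.\ the Alexander polynomial --- at which point the Floer dressing adds nothing and you are back to the degree bound. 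That degree bound does work, but it requires checking that in
\[
\nabla_{P(2\ell,q,r)}(z)=\nabla_{T_{2,q}}(z)\,\nabla_{T_{2,r}}(z)\pm|\ell|\,z\,\nabla_{T_{2,q+r}}(z)
\]
the two top-degree terms do not cancel; when $\min(|q|,|r|)=1$ they have equal degree and cancellation is exactly what happens for the excluded parameters (e.g.\ $P(2,3,-1)$ is the unknot), so the hypotheses $\{1,-1\},\{2,-1\},\{-2,1\}\not\subset\{p,q,r\}$ must enter your argument explicitly, and they never do. Finally, even case (1) is not free: genus $1$ requires nontriviality, which cannot be read off from the Alexander polynomial, since e.g.\ $P(-3,5,7)$ satisfies $pq+qr+rp=-1$ and has trivial Alexander polynomial; this is why Gabai's sutured-manifold argument (Theorem~\ref{thm:gabai}) or an equivalent input is needed there.
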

A three-strand pretzel knot $P=P(p,q,r)$ with all odd coefficients
therefore satisfies PCSC by Theorem~\ref{thm:wang}. For $P=P(2\ell,
q,r)$ with $q,r$ odd then we have the following simple consequence of
the above statement:
\begin{cor}\label{cor:ThreeStrand}
  For a three-stand pretzel knot $P$ either the genus $g(P)$ is
  different from 2, or up to mirroring it is $P(2\ell, 3,1)$,
  $P(2\ell, 3,-3)$ or $P(2\ell, -5,1)$ for some $\ell\in \Z$. \qed
  \end{cor}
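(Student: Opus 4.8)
The plan is to turn the genus formula of Theorem~\ref{thm:3stcase} into a finite enumeration problem by solving $g(P)=2$ directly. Since the all-odd case gives $g=1$ by part~(1), it never produces a genus-$2$ knot, so I only need to analyze the cases where $P=P(2\ell,q,r)$ with $q,r$ odd and $\ell\neq 0$. The two relevant subcases are exactly parts~(2) and~(3) of the theorem, and in each I set the displayed genus equal to $2$ and enumerate the (finitely many) solutions in odd integers $q,r$.

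First I would treat part~(2), where $q,r$ have the same sign: here $g(P)=\tfrac12(|q|+|r|)=2$ forces $|q|+|r|=4$. Since $q,r$ are odd, the only possibilities (up to ordering) are $\{|q|,|r|\}=\{1,3\}$; the value $\{2,2\}$ is excluded by oddness. With the same-sign hypothesis, after possibly mirroring to make the entries positive, this yields $P(2\ell,3,1)$ (the ordering of $q,r$ being immaterial by the dihedral symmetry noted in the introduction). Next I would treat part~(3), where $q,r$ have opposite signs: here $g(P)=\tfrac12(|q|+|r|-2)=2$ forces $|q|+|r|=6$. The odd solutions are $\{|q|,|r|\}=\{3,3\}$, $\{1,5\}$, and $\{5,1\}$, with $q,r$ of opposite sign. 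Up to mirroring these give $P(2\ell,3,-3)$ and $P(2\ell,5,-1)$ (equivalently $P(2\ell,-5,1)$), matching the three cases listed in the corollary.

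The one point that requires care, rather than being purely routine, is the bookkeeping under mirroring and the dihedral reordering: mirroring sends $P(a_1,\dots,a_n)$ to $P(-a_1,\dots,-a_n)$, which flips the sign of $\ell$ and of every odd entry, so I must check that each enumerated solution is recorded in the normalized form quoted in the statement and that no genuinely distinct family is collapsed or double-counted. I also need to confirm that the excluded configurations $\{2,-1\}$ and $\{-2,1\}$ (forbidden as subsets of $\{a_i\}$ by our standing assumption) do not remove any of the tabulated solutions; since those forbidden pairs involve $a_1=\pm2$ paired with $\mp1$ while here $a_1=2\ell$ with $q,r$ taking values in $\{1,3,5\}$ up to sign, I would verify that the surviving triples indeed respect the assumption for appropriate $\ell$.

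I expect the main obstacle to be precisely this normalization: ensuring that ``up to mirroring'' accounts correctly for the sign conventions so that the opposite-sign subcase collapses to exactly the two listed families $P(2\ell,3,-3)$ and $P(2\ell,-5,1)$, rather than an apparently larger list. Once the sign analysis is pinned down, the remainder is the elementary integer arithmetic of writing $4$ and $6$ as sums of two odd positive integers, which admits only the few solutions above, thereby establishing that any three-strand pretzel knot of genus $2$ is, up to mirroring, one of the three stated forms.
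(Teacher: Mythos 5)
Your proposal is correct and follows exactly the route the paper intends: the corollary is stated as a direct consequence of Theorem~\ref{thm:3stcase}, obtained by setting each genus formula equal to $2$ and enumerating the odd solutions of $|q|+|r|=4$ (same signs) and $|q|+|r|=6$ (opposite signs), then normalizing up to mirroring and the dihedral symmetry. Your additional care about mirroring conventions and the excluded pairs $\{2,-1\}$, $\{-2,1\}$ is sound but only affects which values of $\ell$ actually occur, not the (possibly over-inclusive) list in the statement.
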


\subsection{All $a_i$'s are odd}
The following theorem of Gabai describes the genus of an $n$-strand pretzel knot
with all coefficients odd for a general (odd) $n$.
Recall that we always assume
that $\{ a_i\}_{i=1}^n$ cannot contain both $1$ and $-1$.

\begin{thm}(Gabai, \cite[Theorem~3.2]{gabai})
  \label{thm:gabai}
  Suppose that $P=P(a_1, \ldots , a_n)$ is an $n$-strand pretzel knot
  with $n \geq 3$ and all $a_i$ odd, and there are no two indices
  $i,j$ with $a_ia_j=-1$. Then the genus $g(P)$ is equal to
  $\frac{1}{2}(n-1)$. In particular, $g(P)=2$ if and only if
  $n=5$. \qed
\end{thm}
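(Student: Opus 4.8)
The plan is to prove the two bounds $g(P)\le \tfrac12(n-1)$ and $g(P)\ge \tfrac12(n-1)$ separately; the final clause is then immediate, since $\tfrac12(n-1)=2$ forces $n=5$. For the upper bound I would use the Seifert surface $F$ carried by the standard diagram of $P$. Since every $a_i$ is odd (and $n$ is odd, as $P$ is a knot), the coherent orientation of $P=\partial F$ extends over the surface consisting of two disks joined by $n$ twisted bands, one for each strand. Built from two $0$-handles and $n$ $1$-handles, $F$ has $\chi(F)=2-n$, and being connected with a single boundary circle it satisfies $\chi(F)=1-2g(F)$; hence $g(F)=\tfrac12(n-1)$ and $g(P)\le \tfrac12(n-1)$.

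All the content lies in the reverse inequality, and the first point to record is that the Alexander polynomial cannot deliver it. Expressing the Seifert form of $F$ in the natural basis of cycles passing through consecutive bands gives a tridiagonal integral $(n-1)\times(n-1)$ Seifert matrix $V$, and the top coefficient of $\Delta_P$ is $\pm\det V$. A short induction on the size of $V$ identifies this determinant, up to a power of $2$, via
\[
2^{\,n}\det V=\prod_{i=1}^{n}(1+a_i)+\prod_{i=1}^{n}(1-a_i).
\]
This quantity can vanish under our hypotheses: for $P(3,-5,-7)$ it is $96-96=0$. Thus $\deg\Delta_P$ may fall below $n-1$, and the Seifert inequality $\deg\Delta_P\le 2g(P)$ is strictly too weak to pin down the genus.

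To obtain the lower bound I would therefore pass to Gabai's sutured-manifold theory \cite{gabai}. The pretzel knot is arborescent, so $F$ is realized as an iterated Murasugi sum (plumbing) of elementary twisted surfaces; by Gabai's sum theorem, a Murasugi sum of taut (Thurston-norm-minimizing) surfaces is again taut. The strategy is to verify that each plumbing summand is taut and that the plumbing is non-degenerate, whence $F$ itself is taut and $g(P)=g(F)=\tfrac12(n-1)$. The crux, and the step I expect to be the main obstacle, is this tautness verification: one must produce a sutured-manifold hierarchy for the complement of $F$, ruling out any compressing disk or product annulus that would reduce the genus. It is exactly here that the hypothesis excluding a pair with $a_ia_j=-1$ (equivalently, the absence of a cancelling $\{1,-1\}$ pair of strands) is used, since such a pair would permit a Reidemeister-type destabilization lowering the effective number of strands. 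Granting tautness, the two inequalities coincide, giving $g(P)=\tfrac12(n-1)$ and, in particular, $g(P)=2$ precisely when $n=5$.
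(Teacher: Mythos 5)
A preliminary remark on the comparison you were asked for: the paper does not prove this statement at all --- it quotes it, with a \qed, from Gabai's memoir on genera of arborescent links --- so your sketch has to stand entirely on its own. Its first half does stand: the standard pretzel surface $F$ (two disks joined by $n$ bands, each with an odd number of half-twists) is orientable with $\chi(F)=2-n$, giving $g(P)\le \frac{1}{2}(n-1)$, and your observation that the Alexander polynomial cannot deliver the reverse inequality (your example $P(3,-5,-7)$, where the putative leading coefficient vanishes) is correct and is a good explanation of why genuinely topological input is required.

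The genuine gap is in the lower bound, which is the entire content of the theorem. Your plan rests on the claim that, since $P$ is arborescent, $F$ is an iterated Murasugi sum (plumbing) of elementary twisted surfaces, so that Gabai's sum theorem reduces tautness of $F$ to tautness of the summands. For odd pretzel surfaces this structural claim is false. First, Murasugi summands must themselves be Seifert surfaces, hence orientable; but splitting the bands of $F$ into two groups along a sphere leaves pieces that are disks with odd-twisted bands attached, i.e. non-orientable, so the decomposition suggested by the arborescent (tangle) structure is not a Murasugi sum of Seifert surfaces. Second, no decomposition of the kind you describe can exist in general: an Euler characteristic count shows that a nontrivial plumbing decomposition of a genus-one Seifert surface must consist of two annuli, and plumbing two elementary (unknotted) twisted annuli always produces a two-bridge (double twist) knot or link; yet $P(3,3,3)$ --- the knot $9_{35}$ --- has genus one and bridge number three, and the same holds for your own example $P(3,-5,-7)$ by Boileau--Zieschang, so their pretzel surfaces admit no such decomposition. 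This is precisely why Gabai's memoir does not deduce the pretzel case from the Murasugi sum theorem, but instead proves tautness of $F$ by exhibiting disk decompositions (a sutured-manifold hierarchy) of the complement of $F$ directly, and it is in that hierarchy that the hypothesis excluding a pair with $a_ia_j=-1$ is actually used. You explicitly defer exactly this step (``the step I expect to be the main obstacle''), so what is deferred is not a technical verification inside your framework but the whole proof; as written, the proposal establishes only the easy inequality $g(P)\le\frac{1}{2}(n-1)$.
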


\subsection{The first coefficient $a_1$ is even}
In this case, work of Kim-Lee provides a bound (and often a formula)
for the genus of $P=P(a_1, \ldots , a_n)$ (with $a_1$ even and all
$a_i$ with $i>1$ odd). We will again assume that $\{ a_i\}_{i=1}^n$
does not contain both $1$ and $-1$, $a_1\neq 0$ and if $a_1=\pm 2$
then there is no further $a_i$ which is equal to $\mp 1$.  By
determining the Alexander-Conway polynomial $\nabla _P(z)$ of $P$ and
identifying its leading coefficient, the following bound on the
Seifert genus $g(P)$ has been proved:

\begin{thm}(Kim-Lee, \cite[Theorem~4.1]{genera-pretzel})
\label{thm:gen-pretzel}
  Suppose that the pretzel knot $P=P(a_1, \ldots , a_n)$ has
  $a_1$ even ($\neq 0$), which (by possibly taking the mirror) can be assumed
  to be positive.  Let
  $\alpha =\sum _{i=2}^n {\rm {sign}} (a_i)$ and $\delta =\sum _{i=2}^n
  (\vert a_i\vert -1)$. Then the genus $g(P)$ of $P$ is bounded from below by
  \begin{itemize}
  \item $\frac{1}{2}(\delta +2)$ if $n$ is odd and $\alpha \neq 0$.
    \item $\frac{1}{2}\delta $ if $n$ is odd and $\alpha = 0$. 
\item $\frac{1}{2}(a_1 +\delta )$ if $n$ is even and
  $\alpha \neq -1$.
  \item $\frac{1}{2}( a_1 +\delta )-1$ if $n$ is even and
    $\alpha = -1$.
  \end{itemize}
  In addition, if none of the $a_i$ are equal to $\pm 1$, then the bounds above
  provide the precise value of the genus $g(P)$. \qed
  \end{thm}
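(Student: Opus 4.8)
The backbone of the argument is the classical genus bound for the Conway polynomial: for any knot $K$ one has $\deg_z \nabla_K(z)\le 2g(K)$, so any lower bound on the degree of $\nabla_P(z)$ yields the desired lower bound on $g(P)$. The plan is therefore to compute $\nabla_P(z)$ well enough to pin down its top-degree behavior. I would do this by a skein recursion that peels half-twists off one band at a time, using the skein relation $\nabla_{L_+}(z)-\nabla_{L_-}(z)=z\,\nabla_{L_0}(z)$ recorded above: changing one crossing in a twist region lowers the number of half-twists in that band by two (after a Reidemeister~II cancellation), while the oriented resolution $L_0$ produces the simpler diagram that drives the recursion.

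The essential input is the orientation data of Figure~\ref{fig:Iranyitas1}: within each band the two strands are either parallel or anti-parallel, and this dictates the nature of $L_0$. In a band whose strands are parallel the oriented resolution drops a single crossing, so the recursion $\nabla(m)-\nabla(m-2)=z\,\nabla(m-1)$ makes the $z$-degree grow by one per crossing; in a band whose strands are anti-parallel the resolution caps the band off, so $\nabla(m)-\nabla(m-2)$ is $z$ times a fixed polynomial and the degree stays bounded in $m$. For $P(a_1,\dots,a_n)$ with $a_1$ even and $a_2,\dots,a_n$ odd, the orientations of Figure~\ref{fig:Iranyitas1}(b),(c) make each odd band contribute its twists toward the degree (accounting for $\delta$), while the even band $a_1$ is anti-parallel when $n$ is odd (so $a_1$ never enters the bound) and parallel when $n$ is even (so $a_1$ is added to the degree); this is exactly the effect of the orientation switch at the first strand noted in the caption. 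Assembling these local contributions and tracking how the leading coefficients of the individual bands combine under the closure yields the stated value of $\tfrac12\deg_z\nabla_P$. The refinements $\alpha\neq 0$ versus $\alpha=0$ (for $n$ odd) and $\alpha\neq -1$ versus $\alpha=-1$ (for $n$ even) are precisely the arithmetic conditions governing whether the top contributions reinforce one another or cancel, shifting the degree by the indicated correction; as a consistency check, the three-strand specialization must reproduce Theorem~\ref{thm:3stcase}.

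The equality clause is where the hypothesis that no $a_i=\pm1$ is used: the obvious spanning surface of the pretzel diagram (obtained by Seifert's algorithm for these orientations) furnishes an upper bound on $g(P)$ numerically equal to $\tfrac12\deg_z\nabla_P$, and one checks that it is genus-minimizing by the sutured-manifold techniques underlying the all-odd case of Theorem~\ref{thm:gabai}. The genus is then determined once the computed leading coefficient of $\nabla_P(z)$ is shown to be genuinely nonzero, i.e.\ that the degree does not collapse. I expect this non-vanishing to be the main obstacle. It is exactly a band with $a_i=\pm1$ (a single crossing, contributing $\vert a_i\vert-1=0$ to $\delta$) that can make the top-degree coefficient cancel, so in general one obtains only a lower bound, while the no-$\pm1$ hypothesis guarantees the leading coefficient survives and the bound becomes sharp. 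Carefully proving that this leading coefficient is a nonzero integer in each of the four cases --- rather than an expression that happens to vanish for some sign patterns of the $a_i$ --- is the heart of the computation.
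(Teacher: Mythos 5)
First, a point of context: the paper does not prove this statement at all --- it is quoted from Kim--Lee \cite{genera-pretzel}, and the only indication of the method is the sentence preceding it, namely that the result follows ``by determining the Alexander--Conway polynomial $\nabla_P(z)$ of $P$ and identifying its leading coefficient.'' Your overall strategy --- computing $\nabla_P$ by a skein recursion on the twist regions, using $\deg_z\nabla_K\leq 2g(K)$ for the lower bound, and the diagram's Seifert surface for the matching upper bound --- is exactly that method, so in outline you have reconstructed the intended route, including the correct role of the parallel/anti-parallel orientation dichotomy of Figure~\ref{fig:Iranyitas1}.

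However, as a proof of the full statement there is a genuine gap in the cases where some $a_i=\pm 1$, and your description of the mechanism there is backwards. You claim that a $\pm1$ band ``can make the top-degree coefficient cancel, so in general one obtains only a lower bound.'' But if the coefficient at the claimed degree vanished, the Conway polynomial would certify only a \emph{weaker} bound; the lower-bound clause of the theorem (asserted for all such pretzel knots, $\pm1$ entries included) is then precisely what your argument fails to deliver. What actually happens is the opposite: with $\pm1$ bands the degree of $\nabla_P$ can strictly \emph{exceed} the stated value, because a band with $|a_i|=1$ contributes $0$ to $\delta$ yet still contributes crossings to the resolved links in the recursion. Concretely, for $P(2,1,1,1,1)$ the recursion used in the proof of Theorem~\ref{thm:many} gives $\nabla_{P(2,1,1,1,1)}(z)=\prod_{i=2}^{5}\nabla_{T_{2,1}}(z)\pm z\,\nabla_{P(1,1,1,1)}(z)=1\pm(z^4+2z^2)$, since $P(1,1,1,1)$ is the $(2,4)$ torus link with linking number $\pm2$; hence $g\geq 2$, while the stated bound is $\tfrac{1}{2}(\delta+2)=1$. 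So in the $\pm1$ cases the stated formulas are \emph{not} ``$\tfrac12\deg_z\nabla_P$ with a possibly vanishing top coefficient''; they sit strictly below the top degree, and proving that the genus still dominates them requires identifying some surviving nonzero coefficient at or above the claimed degree --- an argument your proposal does not supply. Two smaller points: the appeal to sutured-manifold techniques for the equality clause is unnecessary (once the coefficient in degree $2G$ is nonzero and the diagram exhibits a genus-$G$ Seifert surface, minimality is automatic), and the part you yourself flag as ``the heart of the computation'' --- the explicit leading coefficients in the four $(n,\alpha)$ cases, which is where the conditions $\alpha\neq 0$ and $\alpha\neq -1$ arise --- is left entirely open, so even in the no-$\pm1$ case this is a plan rather than a proof.
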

  A simple consequence of the above result is:
  \begin{cor}
\label{cor:higherstrands}
    The pretzel knot $P=P(a_1, \ldots , a_n)$ with $a_1\neq 0$ even and
    $a_i$ odd ($i>1$) and with $n\geq 4$
    has genus $>2$ unless
    \begin{enumerate}
\item all $a_i$ with $i>1$ is either $1$ or $-1$ (all these with the
  same sign),
\item $n$ odd, $\alpha \neq 0$, $a_1=2\ell$, $a_2=\pm 3$ and for
  $i>2$ all $a_i=\pm 1$ (all these with the same sign);
    \item $n$ odd, $\alpha =0$, $a_1=2\ell$, $a_2=\pm 3$, $a_3=\pm 3$
    and for $i>3$ all $a_i=\pm 1$ (all with the same sign),
    \item $n$ even, $\alpha \neq -1$, $a_1=2$, $a_2=\pm 3$
      and for $i>2$ all $a_i=\pm 1$ (all with    the same sign),
    \item $n$ even, $\alpha =-1$, $a_1=4$, $a_2=\pm 3$,
      and for $i>2$ all $a_i=\pm 1$ (all with    the same sign),
    \item $n$ even, $\alpha =-1$, $a_1=2$, $a_2=\pm 5$,
      and for $i>2$ all $a_i=\pm 1$ (all with    the same sign).
    \item $n$ even, $\alpha =-1$, $a_1=2$, $a_2=\pm 3$, $a_3=\pm 3$,
      and for $i>3$ all $a_i=\pm 1$ (all with    the same sign). \qed
      \end{enumerate}
    \end{cor}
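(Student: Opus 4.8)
The plan is to read everything off the Kim--Lee genus bounds of Theorem~\ref{thm:gen-pretzel}, exploiting that those four expressions are \emph{lower} bounds for $g(P)$. Since $g(P)$ is unchanged under mirroring we may assume $a_1>0$, as in that theorem. Because $g(P)$ is at least the stated bound, it suffices to prove the contrapositive in the following efficient form: if $P$ satisfies none of the configurations~(1)--(7), then the relevant lower bound is already $>2$, forcing $g(P)>2$. This direction never uses the exactness clause of the Kim--Lee formula (valid only when no $a_i=\pm1$); only the lower-bound property is needed, which is what makes the argument short. Note also that the list~(1)--(7) need only \emph{contain} the locus where the bound is $\leq2$; it may be over-inclusive, and this is harmless.

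Before splitting into cases I would record the arithmetic constraints that do the real work. With $\alpha=\sum_{i=2}^n{\rm sign}(a_i)$ and $\delta=\sum_{i=2}^n(|a_i|-1)$ as in the theorem, each $a_i$ with $i>1$ is odd, so every term $|a_i|-1$ is a nonnegative even integer and hence $\delta\in\{0,2,4,\dots\}$; moreover all four bounds are then integers, so ``bound $>2$'' means ``bound $\geq3$''. Since there are $n-1$ signs, $\alpha\equiv n-1\pmod2$. Finally I would invoke the standing assumptions: as $\{1,-1\}$ is not a subset of $\{a_i\}$, all entries with $|a_i|=1$ share a common sign; and as $\{2,-1\}$ is excluded, once $a_1=2$ no $a_i$ equals $-1$, so every $\pm1$ entry is $+1$. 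These three facts are exactly what eliminate the apparent exceptions.

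Then I would run through the four regimes of Theorem~\ref{thm:gen-pretzel}. For $n$ odd with $\alpha\neq0$ the bound $\tfrac12(\delta+2)$ is $\leq2$ iff $\delta\in\{0,2\}$, giving cases~(1) and~(2). For $n$ odd with $\alpha=0$ the bound $\tfrac12\delta$ is $\leq2$ iff $\delta\leq4$; the sign count together with $n\geq5$ (recall $n\geq4$ and $n$ odd) excludes $\delta\in\{0,2\}$, the single-$\pm5$ option for $\delta=4$ cannot give $\alpha=0$, and $\delta=4$ survives only as two strands $\pm3$ with the remaining $\pm1$'s forcing $n=5$, which is case~(3). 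For $n$ even with $\alpha\neq-1$ the bound $\tfrac12(a_1+\delta)$ is $\leq2$ iff $a_1+\delta\leq4$, leaving $(a_1,\delta)\in\{(2,0),(4,0),(2,2)\}$, i.e.\ cases~(1) and~(4). In each of these three regimes, avoiding the listed configurations pushes $\delta$ or $a_1+\delta$ to its next even value, so the integer bound is $\geq3$.

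The fourth regime, $n$ even with $\alpha=-1$, is where the main obstacle lies and where I would spend the most care. Here the bound $\tfrac12(a_1+\delta)-1$ is $\leq2$ iff $a_1+\delta\leq6$, so I would enumerate the six even pairs $(a_1,\delta)$ with $a_1\in\{2,4,6\}$ and test each against $\alpha=-1$. The delicate point is the interplay of three constraints at once: $\alpha$ is forced odd (because $n-1$ is odd), it must equal $-1$, and the sign of the $\pm1$ entries is pinned by the standing assumptions. Carrying this out shows that $(2,0),(2,2),(4,0),(6,0)$ are all incompatible with $\alpha=-1$; that $(4,2)$ is realizable only by a single $+3$ strand with the $\pm1$'s negative and $n=4$, namely case~(5); and that $(2,4)$ is realizable only by two $-3$ strands with $n=4$, namely case~(7) --- in particular the single-$\pm5$ alternative for $(2,4)$ cannot reach $\alpha=-1$ once $a_1=2$ forbids $-1$ entries, so the listed case~(6) does not actually occur. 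Thus outside cases~(5)--(7) one has $a_1+\delta\geq8$ and the bound is $\geq3$. It is precisely these sign-and-parity eliminations, rather than any computation with the bounds themselves, that require attention; everything else is a finite check.
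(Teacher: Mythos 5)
Your proposal is correct and takes essentially the same approach as the paper: the paper offers no written proof, stating the corollary as a ``simple consequence'' of the Kim--Lee bounds in Theorem~\ref{thm:gen-pretzel}, and your enumeration of the four regimes (using only the lower-bound property together with the standing exclusions of $\{1,-1\}$ and $\{2,-1\}$) is exactly the intended argument. Your observation that case~(6) is actually vacuous once $a_1=2$ forbids $-1$ entries is fine, since the corollary's list only needs to contain the locus where the bound is $\leq 2$.
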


\section{PCSC for pretzel knots with $n\neq 5$ strands}
\label{sec:generalStrands}
In this section we start proving Theorem~\ref{thm:main1}. First we
deal with those pretzel knots where $n\neq 5$, or when $n=5$ and  the
first coefficient $a_1$ is even.

\subsection{Three-strand pretzel knots}
\label{ss:3strand}
Corollary~\ref{cor:ThreeStrand} gave a list of those three-strand pretzel knots
which have Seifert genus $g(P)=2$.


Suppose that the three-strand pretzel knot has one even coefficient
$a_1=2\ell$, which for simplicity is assumed to be negative.  Then by
the repeated application of the skein relation for the Conway
polynomial $\nabla$ we have that (with $\ell <0$)
\[
\nabla _{P(2\ell, q,r )}(z)=\nabla _{P(0, q,r)}(z)+
\vert \ell \vert z \nabla _{T_{2,q+r}}(z).
  \]
      (In the inductive step we used the fact that the 2-component link
      $L_0$ involved in the skein triple is the same torus link
  $T_{2,q+r}$ at every step.)
  Note that  $P(0, q,r)$ is the
  connected sum of two torus knots $T_{2,q}$ and $T_{2,r}$.
  Since $a_2(T_{2,2n+1})={ n+1 \choose 2}$ and
  for the torus link $a_1(T_{2,2m})=\ell k (T_{2,2m})=m$, it follows
  that for $\{ q,r\}=\{ \pm 3 , \pm 1\}, \{ \pm 3, \pm 3\}, \{ \pm 5, \pm 1\}$
  (including all the possible cases of Corollary~\ref{cor:ThreeStrand})
  we get either $a_2(P)\neq 0$ or $\vert \ell \vert $ so small that
  $P(2\ell, q,r)$ is a knot with at most 16 crossing. Since for those
  the PCSC has been verified, we have
  \begin{prop}
    \label{prop:3strandFinal}
    If $P=P(p,q,r)$ is a three-strand pretzel knot, then the purely
    cosmetic surgery conjecture holds for $P$. \qed
    \end{prop}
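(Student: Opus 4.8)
The plan is to reduce at once to the genus-two case and then extract a numerical obstruction from the Alexander--Conway polynomial. By Corollary~\ref{cor:Hcor} the conjecture already holds for every three-strand pretzel knot with $g(P)\neq 2$, so I only have to treat those of Seifert genus exactly two. If all three parameters are odd, Theorem~\ref{thm:3stcase} gives $g(P)=1$; hence I may assume one parameter is even and, after mirroring if necessary, that it is $a_1=2\ell$ with $\ell<0$. Corollary~\ref{cor:ThreeStrand} then leaves only the three families $P(2\ell,3,1)$, $P(2\ell,3,-3)$ and $P(2\ell,-5,1)$ (up to mirroring), so it suffices to dispose of these.

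For each family I would apply the obstruction of Theorem~\ref{thm:BoyerLines}: a knot admitting purely cosmetic surgeries satisfies $\Delta_K''(1)=0$, which is equivalent to $a_2(K)=0$. To compute $a_2$ I resolve the crossings on the even strand one at a time. Since the two strands running through the $a_1$-box are oriented so that the oriented resolution is always the \emph{same} two-component torus link $T_{2,q+r}$, each step of the Conway skein relation changes $a_1$ by two and contributes a fixed multiple of $z\,\nabla_{T_{2,q+r}}(z)$. Iterating down to $a_1=0$, where $P(0,q,r)=T_{2,q}\#T_{2,r}$, yields
\[
\nabla_{P(2\ell,q,r)}(z)=\nabla_{P(0,q,r)}(z)+|\ell|\,z\,\nabla_{T_{2,q+r}}(z).
\]
Extracting the coefficient of $z^2$ with $\nabla_{K_1\#K_2}=\nabla_{K_1}\nabla_{K_2}$, together with $a_2(T_{2,2n+1})=\binom{n+1}{2}$ and $a_1(T_{2,2m})=m$, then gives
\[
a_2\big(P(2\ell,q,r)\big)=a_2(T_{2,q})+a_2(T_{2,r})+|\ell|\cdot\tfrac{q+r}{2}.
\]

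Evaluating this on the three families should show $a_2\neq0$ throughout. For $P(2\ell,3,1)$ the value is $1+2|\ell|$, for $P(2\ell,-5,1)$ it is $3-2|\ell|$ — both odd, hence never zero — and for $P(2\ell,3,-3)$ it is $a_2(T_{2,3})+a_2(T_{2,-3})=2$, since the link term drops out. Theorem~\ref{thm:BoyerLines} then excludes purely cosmetic surgeries on every knot in the list, completing the proof. I expect the delicate point to be the orientation bookkeeping in the skein recursion — confirming that the oriented resolution of each even-strand crossing is genuinely the fixed torus link $T_{2,q+r}$, and that mirroring correctly reduces the subfamilies with $a_1>0$ to the computation above. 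Reassuringly, the conclusion $a_2\neq0$ is insensitive to the overall sign of the link term, so any residual ambiguity (or any finite set of small $|\ell|$) is harmless, and in the worst case such small knots have at most sixteen crossings and are covered by~\cite{Hanselman}.
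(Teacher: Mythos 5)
Your proposal is correct and follows essentially the same route as the paper: reduce to the genus-two list of Corollary~\ref{cor:ThreeStrand} via Corollary~\ref{cor:Hcor}, establish the recursion $\nabla_{P(2\ell,q,r)}=\nabla_{P(0,q,r)}+|\ell|\,z\,\nabla_{T_{2,q+r}}$ by the Conway skein relation, and rule out $a_2(P)=0$ using Theorem~\ref{thm:BoyerLines}, with small-crossing cases covered by \cite{Hanselman}. The only difference is cosmetic: you evaluate $a_2$ explicitly on the three families (getting it odd or equal to $2$, hence nonzero, insensitive to the sign ambiguity in the linking-number term), whereas the paper leaves the same computation implicit and allows the sixteen-crossing fallback.
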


    \subsection{More than three strands}
We start with the case when $a_1$ is even (and nonzero).

    \begin{thm}\label{thm:many}
      Suppose that $P=P(a_1, \ldots , a_n)$ is an $n$-strand pretzel
      knot with $n\geq 4$ and $a_1$ even, while all $a_i$ with $i>1$
      are odd. Then $P$ satisfies PCSC.
    \end{thm}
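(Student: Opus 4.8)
The plan is to combine the genus information gathered in Section~\ref{sec:genera} with the polynomial obstructions of Section~\ref{sec:Obstructions}. Since $P$ has thickness at most $1$ by Proposition~\ref{prop:thickness}, Corollary~\ref{cor:Hcor} already yields PCSC whenever $g(P)\neq 2$, so the entire theorem reduces to the case of Seifert genus exactly $2$. After possibly replacing $P$ by its mirror (which affects neither the truth of PCSC nor the standing hypotheses) I may assume $a_1>0$, and then Corollary~\ref{cor:higherstrands} tells me that $g(P)>2$ except in the seven explicitly listed families. In each of these families all but at most two of the parameters $a_i$ with $i>1$ equal $\pm 1$, so only finitely many ``shapes'' survive, each depending on the single integer $\ell=a_1/2$. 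It therefore suffices to verify PCSC for these families.

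For each family I would compute the Conway polynomial $\nabla_P(z)=1+\sum_{i\geq 1}a_{2i}(P)z^{2i}$ exactly as in the three-strand computation preceding Proposition~\ref{prop:3strandFinal}. Resolving the $2|\ell|$ crossings of the even first strand by the skein relation for $\nabla$ produces, at every step, the same link $L_0$ in the $L_0$-position (the oriented smoothing of a crossing in the first strand), so that
\[
\nabla_P(z)=\nabla_{P(0,a_2,\ldots,a_n)}(z)+|\ell|\,z\,\nabla_{L_0}(z),
\]
where $P(0,a_2,\ldots,a_n)$ is the connected sum $T_{2,a_2}\#\cdots\# T_{2,a_n}$. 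The two orientation types of Figure~\ref{fig:Iranyitas1}(b),(c) (that is, $n$ odd versus $n$ even) must be separated, since they determine whether $L_0$ is a two-component link or a knot, and hence whether $|\ell|$ actually enters the coefficient $a_2(P)=\tfrac12\Delta_P''(1)$. Extracting this coefficient and using $a_2(T_{2,2m+1})=\binom{m+1}{2}$ together with $a_1(L_0)=\ell k(L_0)$ gives an explicit value for $a_2(P)$ that is affine in $|\ell|$; consequently, within each family $a_2(P)$ vanishes either for no $\ell$, for a single $\ell$ (when $\ell k(L_0)\neq 0$), or identically (only when both the torus-knot sum and $\ell k(L_0)$ vanish). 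Whenever $a_2(P)\neq 0$, Theorem~\ref{thm:BoyerLines} forbids purely cosmetic surgeries, which disposes of each family up to at most one leftover knot, or forces the entire family into the degenerate regime $a_2(P)=0$.

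The remaining work, which I expect to be the genuine obstacle, is to treat the knots with $a_2(P)=0$, where the Casson--Walker obstruction is silent. For these, Theorem~\ref{thm:NiWuMain} shows that a purely cosmetic pair of slopes must be $\pm\frac{p}{q}$ with $q^2\equiv -1\pmod p$, so Theorem~\ref{thm:w3application} applies and it suffices to verify $w_3(P)\neq 0$. I would compute $w_3(P)=\frac{1}{72}V_P'''(1)+\frac{1}{24}V_P''(1)$ from the Jones polynomial, using the computational scheme for the Jones polynomial of pretzel knots developed in the Appendix (or, equivalently, iterating the crossing-change formula for $w_3$ along the first strand and feeding in the values of $a_2$ and of the linking numbers already computed above). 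The delicate point is that $w_3(P)$ must be controlled \emph{uniformly in $\ell$} for any family in which $a_2(P)$ vanishes identically, rather than merely for one knot; and if some case were to survive with both $a_2(P)=0$ and $w_3(P)=0$, it would necessarily have bounded crossing number and could then be eliminated by the verification of PCSC for prime knots with at most $16$ crossings due to Hanselman~\cite{Hanselman}.
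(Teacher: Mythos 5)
Your skeleton is the same as the paper's: Proposition~\ref{prop:thickness} plus Corollary~\ref{cor:Hcor} reduce the theorem to $g(P)=2$, Corollary~\ref{cor:higherstrands} lists the exceptional families, and the coefficient $a_2$ of the Conway polynomial (backed up by the $\leq 16$-crossing verification of Hanselman) is meant to dispose of them. However, your execution of the Conway computation fails exactly where the paper has to be careful, namely when $n$ is even. You claim that resolving the crossings of the first band produces ``the same link $L_0$ at every step,'' and that the two orientation types of Figure~\ref{fig:Iranyitas1}(b),(c) differ in ``whether $L_0$ is a two-component link or a knot.'' The latter cannot happen: the oriented smoothing of a self-crossing of a knot always has two components. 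The actual difference is that for $n$ even the two strands of the first band are oriented parallel, so the smoothing does not cut the band; at the $i^{th}$ step it produces $P(2\ell-(2i-1),a_2,\ldots,a_n)$, a \emph{different} two-component link each time. Hence $a_2(P)$ is not of the affine form $a_2\bigl(P(0,a_2,\ldots,a_n)\bigr)+|\ell|\,\ell k(L_0)$ that you wrote, but involves $\sum_{i=1}^{|\ell|}\ell k\bigl(P(2\ell-(2i-1),a_2,\ldots,a_n)\bigr)$, a quantity quadratic in $\ell$. Your trichotomy (no root / one root / identically zero, governed by a single linking number) and the ensuing analysis of families (4)--(7) of Corollary~\ref{cor:higherstrands} — which are precisely the $n$ even families — rest on this incorrect formula.

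The second gap is your contingency plan for a family with $a_2(P)$ vanishing identically in $\ell$. Theorem~\ref{thm:w3application} would indeed reduce such a family to showing $w_3(P)\neq 0$, but you admit you have not controlled $w_3$ uniformly in $\ell$, and your closing claim — that any survivor with $a_2(P)=0$ and $w_3(P)=0$ ``would necessarily have bounded crossing number'' — has no justification: if $a_2\equiv 0$ along a family, nothing bounds $\ell$ among the members with $w_3=0$ short of actually computing $w_3$ as a function of $\ell$, which is the very thing you deferred. The paper never needs $w_3$ in this theorem (it enters only for the five-strand all-odd knots). Instead, case (1) of Corollary~\ref{cor:higherstrands} is dispatched outright as a two-bridge knot via \cite{twobridge} — sidestepping the one family whose constant term $\sum_{i\geq 2} a_2(T_{2,a_i})$ vanishes — and in cases (2)--(7) the correctly computed $a_2(P)$, with nonzero constant term coming from $T_{2,\pm 3}$ or $T_{2,\pm 5}$ plus the linking-number contributions, is nonzero except for finitely many small $\ell$, each giving a knot of at most 16 crossings.
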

    \begin{proof}
      Most of these knots have genus more than 2, hence
      Proposition~\ref{prop:thickness} provides the result.  The
      exceptions (i.e. those pretzel knots considered by the theorem
      which have genus at most 2) are listed in
      Corollary~\ref{cor:higherstrands}, and they can be handled by
      similar means as we did in the case of three-strand knots:
      either they have low crossing number, or the second coefficient
      of the Conway polynomial provides the desired obstruction.

    Indeed, if we have Case (1) of Corollary~\ref{cor:higherstrands},
    then $P$ is a two-bridge knot, and PCSC follows from
    \cite{twobridge}.

    For $n$ odd (cases (2) and (3) in Corollary~\ref{cor:higherstrands})
    the computation of the Conway
    polynomial proceeds exactly as for the three-strand case, providing
    that
\[
\nabla _{P(2\ell, a_2, \ldots , a_n)}(z)=\prod _{i=2}^n \nabla _{T_{2,a_i}}(z)
+\vert \ell \vert z \nabla _{P(a_2, \ldots , a_n)}(z).
\]
By multiplicativity of $\nabla$ under connected sum, we have that
$a_2(\# _{i=2}^n T_{2,a_i})=\sum _{i=2}^n a_2(T_{2,a_i})$ and
$a_2(T_{2,a_i})={\frac{\vert a_i\vert +1}{2} \choose 2}$. Furthermore,
for the two-component link $Q=P(a_2, \ldots , a_n)$ we have
$a_1(Q)=\ell k (Q)=-\frac{1}{2}\sum _{i=2}^n a_i$, where this latter
term is the linking number of the two components of $Q$ (both
unknots). In the cases (2) and (3) the $a_2$-invariants of the torus
knots are 1 (for $T_{2,3}$) and 0 (for the trivial knot), hence the
same argument as for the three-strand case shows that either
$a_2(P)\neq 0$, or the knot has crossing number at most 16, concluding
the argument.

A similar argument works when $n$ is even. Indeed, we can relate
$\nabla _{P(2\ell, a_2, \ldots , a_n)}(z)$ to
$\nabla _{P(0, a_2, \ldots , a_n)}(z)$ by the repeated application of the skein
rule, although this case is slightly different.
Because of the change of the orientation
pattern on the strand with even coefficient, the
link in the skein triple will be different in every step: in
the $i^{th}$ step it will be
$P(2\ell -(2i-1), a_2, \ldots , a_n)$. The expression for
$a_2(P(2\ell, a_2, \ldots , a_n))$ (just as before) will involve a
term $a_2(P(0, a_2, \ldots , a_n))$, which (as before) is the sum
of $a_2$-invariants of alternating torus knots --- mostly the unknot.
The other term now is a sum of the form
$\sum _{i=1}^{\ell } a_1(P(2\ell -(2i-1), a_2, \ldots , a_n))$,
and here the terms are equal to the linking numbers of components of
the two-component links.
In the cases listed under (4)-(7)
in Corollary~\ref{cor:higherstrands} the same scheme will be visible:
there will be only few cases when $a_2$ is zero, and those correspond
to knots with low crossing number, hence the argument is complete. 
\end{proof}

 We close this section with the case when all $a_i$ are odd and $n\geq 6$.   
     \begin{prop}
   \label{prop:nBigOdd}
   If $n\geq 6$ odd and all $a_i$ are odd, then the pretzel knot
   $P(a_1, \ldots , a_n)$ satisfies PCSC.
    \end{prop}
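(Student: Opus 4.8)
The plan is to reduce the statement immediately to the genus obstruction of Corollary~\ref{cor:Hcor}, using Gabai's genus formula (Theorem~\ref{thm:gabai}) to pin down $g(P)$ exactly. Since $th(P)\leq 1$ for every pretzel knot by Proposition~\ref{prop:thickness}, the only thing standing between us and PCSC is verifying that $g(P)\neq 2$, and for all-odd coefficients Gabai's theorem computes the genus outright.

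First I would check that Gabai's hypotheses hold under our standing assumptions. Here $P=P(a_1,\ldots,a_n)$ has all $a_i$ odd and $n\geq 3$, and we always assume that $\{1,-1\}$ is not a subset of $\{a_i\}_{i=1}^n$. The latter assumption is precisely what rules out the exceptional case in Theorem~\ref{thm:gabai}: a relation $a_ia_j=-1$ between integer coefficients would force one of them to equal $+1$ and the other $-1$, which our hypothesis forbids. Hence Theorem~\ref{thm:gabai} applies and yields $g(P)=\tfrac{1}{2}(n-1)$.

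Next, for $n\geq 6$ odd --- equivalently $n\geq 7$ --- this gives $g(P)=\tfrac{1}{2}(n-1)\geq 3$, so in particular $g(P)\neq 2$. Corollary~\ref{cor:Hcor} (which in turn rests on the thickness bound of Proposition~\ref{prop:thickness} together with Hanselman's Corollary~\ref{cor:HanselmanThickness}) then immediately gives that PCSC holds for $P$, completing the argument.

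I do not expect any genuinely hard step here: once Gabai's formula is invoked the genus is never $2$ in this range, so the obstruction of Corollary~\ref{cor:Hcor} applies verbatim and no polynomial computations are needed. The only point requiring care is confirming that the standing no-$\{1,-1\}$ assumption is exactly the condition needed to exclude the $a_ia_j=-1$ exception in Gabai's theorem; everything else is bookkeeping.
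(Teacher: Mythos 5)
Your proof is correct and follows essentially the same route as the paper: the paper's own proof also invokes Gabai's Theorem~\ref{thm:gabai} to get $g(P)=\tfrac{1}{2}(n-1)>2$ and then concludes via the thickness bound of Proposition~\ref{prop:thickness} (i.e.\ via Corollary~\ref{cor:Hcor}). Your additional check that the standing no-$\{1,-1\}$ assumption is exactly what excludes the $a_ia_j=-1$ exception in Gabai's theorem is a point the paper leaves implicit, but it is the same argument.
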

    \begin{proof}
      In these cases Theorem~\ref{thm:gabai} implies that the genus of
      the knot is $\frac{1}{2}(n-1)>2$, hence
      Proposition~\ref{prop:thickness} concludes the argument.
    \end{proof}

\section{Five-strand pretzel knots}
\label{sec:FiveStrand}
Suppose now that $P=P(a_1, \ldots , a_5)$ is a five-strand
pretzel knot with all $a_i$ odd.
Depending on the signs of the coefficients, we will distinguish
two cases.

\subsection{Among the $a_i$'s there are 0,1,4 or 5 negative coefficients}

\begin{lem}
  \label{lem:0145}
  Suppose that the five-strand pretzel knot $P=P(a_1, \ldots , a_5)$ has
  only odd coefficients and among them 0,1,4 or 5
  are negative. Then $\tau (P)\neq 0$.
\end{lem}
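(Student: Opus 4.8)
The plan is to prove that $\tau(P) \neq 0$ for the five-strand pretzel knot $P = P(a_1, \ldots, a_5)$ with all coefficients odd and with $0$, $1$, $4$, or $5$ of them negative, by exhibiting a Legendrian representative whose classical invariants detect a nontrivial $\tau$. The governing inequality is the slice-Bennequin bound $\tb(L) + |\rot(L)| \leq 2\tau(K) - 1$ (valid for any Legendrian representative $L$ of $K$, following Plamenevskaya and the fact that $\tau$ bounds the slice genus). So first I would set up the standard Legendrian front of the pretzel knot, orient it according to the appropriate case of Figure~\ref{fig:Iranyitas1}, and compute $\tb$ and $\rot$ directly from the front. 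The goal is to show that for the sign distributions with $0$, $1$, $4$, or $5$ negative coefficients, one can arrange $\tb + |\rot| \geq 1$, forcing $2\tau(K) - 1 \geq 1$, i.e. $\tau(P) \geq 1 > 0$.

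The reason the hypothesis restricts to $0,1,4,5$ negative coefficients (and excludes the "balanced" cases of $2$ or $3$ negatives) is precisely that these are the cases where the Legendrian bound is sharp enough. For $0$ or $1$ negatives the knot is positive or nearly positive, so I would expect $\tau$ to equal the Seifert genus $g(P) = 2$ directly; for $5$ or $4$ negatives the mirror argument $\tau(m(K)) = -\tau(K)$ reduces to the positive cases, since the defining orientation data for PCSC is mirror-symmetric (recall $S^3_r(m(K)) = -S^3_{-r}(K)$ from the introduction). Concretely, I would compute the writhe and the number of right cusps in the Legendrian front, use $\tb = w - (\text{number of right cusps})$ and $\rot = \tfrac{1}{2}(c_\downarrow - c_\uparrow)$, and track how each strand with a positive or negative odd parameter contributes. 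With mostly-positive crossings the writhe dominates and the bound becomes positive.

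The step I expect to be the main obstacle is the careful bookkeeping of the rotation number $\rot$ relative to $\tb$: because $P$ is a genus-$2$ knot, the bound $\tb + |\rot| \leq 2\tau - 1 \leq 2g - 1 = 3$ leaves only a narrow window, so I must make sure the Legendrian front is optimized (e.g. that cusps are placed to make $|\rot|$ as small as possible while $\tb$ is maximized) rather than giving away slack through an inefficient front. Handling the single-negative case ($1$ or $4$ negatives) is delicate because the one off-sign strand introduces negative crossings that reduce the writhe, and I would need to verify that the resulting $\tb + |\rot|$ is still at least $1$; this likely requires examining the specific local contribution of the negative strand to both the writhe and the cusp count.

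An alternative, and perhaps cleaner, route would be to compute $\tau$ via the $\delta$-graded structure already established in Proposition~\ref{prop:thickness}. Since $th(P) \leq 1$ and $\tau$ for thin-or-nearly-thin knots is controlled by the signature and the top Alexander grading, I would combine the known Seifert genus $g(P) = 2$ (Theorem~\ref{thm:gabai}, since $n = 5$) with the sign count to pin down $\tau$. In the unbalanced sign cases the signature of $P$ is large enough in absolute value that the relation $\tau = -\sigma/2$ (which holds for thin knots, and by the thickness-$\leq 1$ estimate holds up to a controlled error here) gives $\tau \neq 0$. I would default to whichever computation — the Legendrian bound or the signature/$\delta$-grading analysis — makes the four sign cases most uniform, and treat the remaining cases by the mirror symmetry $\tau(m(K)) = -\tau(K)$ to halve the casework.
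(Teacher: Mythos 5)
Your primary (Legendrian) approach has a genuine gap at exactly the case that matters. The slice--Bennequin inequality $\tb(L)+|\rot(L)|\leq 2\tau(K)-1$ only \emph{detects} $\tau\neq 0$ if you can actually exhibit a front with $\tb+|\rot|\geq 1$, and such a front need not exist even when $\tau\geq 1$: the bound is far from sharp in general (for instance, connected-summing any knot with copies of the figure-eight knot leaves $\tau$ unchanged while pushing the maximal self-linking number arbitrarily negative). For the all-positive and all-negative cases you can fall back on alternating/positivity arguments, but for the delicate case of exactly one negative coefficient, e.g.\ $P(a_1,a_2,a_3,a_4,-a_5)$ with $a_i>0$, the knot is not positive, ``nearly positive'' carries no theorem guaranteeing $\tau=g$ or a sharp Bennequin-type representative, and you supply no computation of a front realizing $\tb+|\rot|\geq 1$. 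So the case that the lemma's hypothesis is really designed to handle is left unproven; your stated expectation that $\tau=g(P)=2$ in that case is an assertion, not an argument (and indeed the paper's method only pins $|\tau|$ to the set $\{1,2\}$ there, so the expectation may even be false).

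Your alternative route is much closer to the paper's actual proof, but as written it is not rigorous: $\tau=-\sigma/2$ is a theorem for \emph{thin} knots, these pretzel knots have thickness $1$, and there is no ``controlled error'' version of that identity you can cite. The fix --- which is precisely the paper's two-line argument --- bypasses the signature entirely: $\tau(P)$ is the Alexander grading of a homogeneous element of $\HFKa(P)$ with Maslov grading $0$ (the survivor of the spectral sequence converging to $\hf(S^3)\cong\Field$), so $\tau(P)$ \emph{equals} the $\delta$-grading of that element. The proof of Proposition~\ref{prop:thickness} computes the at most two possible $\delta$-gradings explicitly: up to overall sign they are $\{2,3\}$ when $0$ or $5$ coefficients are negative and $\{1,2\}$ when exactly $1$ or $4$ are negative, so $0$ is never a $\delta$-grading, forcing $\tau(P)\neq 0$. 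No genus, signature, or Legendrian input is needed; what is needed is the one structural fact tying $\tau$ to a Maslov-grading-$0$ generator, which is the ingredient missing from your sketch.
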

\begin{proof}
 As the proof of Proposition~\ref{prop:thickness} shows, in
 these cases the two possible $\delta$-gradings are 3 and 2 (if there
 are only positive coefficients),  2 and 1 (if there is a unique
 negative coefficient), and symmetrically
 $-2$ and $-1$ in case of a unique positive coefficient, and
 $-3$ and $-2$ when there are five negative coefficients.
 Recall that $\tau (P)$ is the Alexander
 grading of one of the homogeneous elements of $\HFKa (P)$ with Maslov
 grading 0. In case $\tau (P)=0$, there should be an element
 with $\delta$-grading 0, a contradiction. Therefore in these cases
 $\tau (P)\neq 0$.
\end{proof}

\begin{prop}
  \label{prop:0145}
  Suppose that the five-strand pretzel knot $P=P(a_1, \ldots , a_5)$ has
  only odd coefficients and among the five odd coefficients  0,1,4 or 5
  are negative. Then $P$ satisfies PCSC.
  \end{prop}
\begin{proof}
  Since in these cases by Proposition~\ref{lem:0145} we have that
  $\tau (P)\neq 0$,  Theorem~\ref{thm:niwu} implies the result.
  \end{proof}

\subsection{There are 2 or 3 negative coefficients among the $a_i$'s}
In this case our arguments will rest on the obstructions
stemming from the coefficient $a_2$ of the Conway polynomial, together
with the $w_3$-invariant introduced in Section~\ref{sec:Obstructions}.
Since the coefficients of
$P=P(a_1, \ldots , a_5)$ are all odd, there is an
obvious Seifert surface of genus two associated to the diagram of the
knot given in Figure~\ref{fig:Pretzel}.
The Seifert matrix in the obvious basis is given
in \cite[Section~2.1]{Varva}, where it has been also shown 
that
\begin{prop}(\cite[Lemma~2.2]{Varva})
\label{prop:a2Formula}
  Suppose that $P=P(a_1, \ldots , a_5)$ is a five-strand pretzel
  knot with $a_i=2k_i+1$ odd. Then
  \[
  a_2(P)=s_2+2s_1+3,
  \]
  where $s_i$ is the value of the $i^{th}$ elementary symmetric
  polynomial in five variables evaluated on $\{ k_1, \ldots ,
  k_5\}$. \qed
\end{prop}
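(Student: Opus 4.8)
The plan is to read the coefficient $a_2(P)$ directly off the Conway polynomial $\nabla_P(z)$, computing the latter from the Seifert matrix of the genus-two surface $\Sigma$ bounded by the standard diagram of $P=P(a_1,\dots,a_5)$. Since all $a_i=2k_i+1$ are odd, $\Sigma$ is the union of two disks joined by five bands; its first homology $H_1(\Sigma)\cong\bfz^4$ is generated by loops $e_1,\dots,e_4$ running through consecutive pairs of bands, and the Seifert matrix $V$ in this basis (recorded in \cite[Section~2.1]{Varva}) is tridiagonal. Its diagonal entries are $d_i=k_i+k_{i+1}+1$, and its super- and subdiagonal entries can be taken to be $V_{i,i+1}=k_{i+1}+1$ and $V_{i+1,i}=k_{i+1}$, so that $V-V^{T}$ is the standard unimodular intersection form of $\Sigma$.

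First I would recall the Seifert-matrix formula $\nabla_P(z)=\det\bigl(t^{\frac{1}{2}}V-t^{-\frac{1}{2}}V^{T}\bigr)$ with $z=t^{\frac{1}{2}}-t^{-\frac{1}{2}}$. Writing $x=t^{\frac{1}{2}}$ and $N:=xV-x^{-1}V^{T}$, the diagonal entries of $N$ become $z(k_i+k_{i+1}+1)$, while for each adjacent pair the product of the two off-diagonal entries collapses, using $x^{2}+x^{-2}=z^{2}+2$, to
\[
N_{i,i+1}N_{i+1,i}=k_{i+1}(k_{i+1}+1)(x^2+x^{-2})-\bigl(V_{i,i+1}-V_{i+1,i}\bigr)^2=k_{i+1}(k_{i+1}+1)z^{2}-1 .
\]
The key point is that the telescoping constant is exactly $1$ precisely because $V_{i,i+1}-V_{i+1,i}=\pm 1$, i.e. because the intersection form is unimodular. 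These two evaluations are the only genuinely geometric input; everything afterward is determinant bookkeeping.

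Next I would expand $\det N$ for the tridiagonal $4\times 4$ matrix via the standard recurrence $D_m=N_{mm}D_{m-1}-\bigl(N_{m-1,m}N_{m,m-1}\bigr)D_{m-2}$ with $D_0=1$ and $D_1=N_{11}$. One checks that the constant term of $\det N$ equals $\det(V-V^{T})=1$ and that the odd powers of $z$ cancel, as they must for a knot. Collecting the coefficient of $z^{2}$ then yields
\[
a_2(P)=(d_1d_2-P_2)+(d_3d_4-P_4)+d_1d_4,\qquad d_i=k_i+k_{i+1}+1,\ \ P_j=k_j(k_j+1).
\]

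The final, and only delicate, step is to verify the symmetric identity $(d_1d_2-P_2)+(d_3d_4-P_4)+d_1d_4=s_2+2s_1+3$. Although the intermediate expression is visibly asymmetric in the $k_i$ (reflecting the ordered tridiagonal basis), expanding it shows that every quadratic monomial $k_ik_j$ with $i<j$ appears exactly once, contributing $s_2$; each linear term occurs with coefficient $2$, contributing $2s_1$; and the constants sum to $3$. I expect this bookkeeping, together with pinning down the exact off-diagonal entries of $V$ so that the signs in $k_{i+1}(k_{i+1}+1)z^{2}-1$ come out correctly, to be the main place where care is needed; the remainder is a routine tridiagonal determinant computation.
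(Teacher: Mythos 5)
You prove the statement correctly, and by essentially the same route the paper relies on: the paper offers no proof of Proposition~\ref{prop:a2Formula}, quoting \cite[Lemma~2.2]{Varva}, and the argument there is exactly this computation --- the tridiagonal Seifert matrix of the standard genus-two surface from \cite[Section~2.1]{Varva}, the Conway polynomial as $\det\bigl(t^{1/2}V-t^{-1/2}V^{T}\bigr)$, and extraction of the $z^2$-coefficient. The only flaw is a harmless slip in your displayed chain of equalities: the product $N_{i,i+1}N_{i+1,i}$ equals $k_{i+1}(k_{i+1}+1)(x^2+x^{-2})-\bigl(V_{i,i+1}^2+V_{i+1,i}^2\bigr)$, not $k_{i+1}(k_{i+1}+1)(x^2+x^{-2})-\bigl(V_{i,i+1}-V_{i+1,i}\bigr)^2$; substituting $x^2+x^{-2}=z^2+2$ in the correct expression yields $k_{i+1}(k_{i+1}+1)z^2-\bigl(V_{i,i+1}-V_{i+1,i}\bigr)^2=k_{i+1}(k_{i+1}+1)z^2-1$, which is the form you actually feed into the tridiagonal recurrence, so the rest of the computation (the coefficient $(d_1d_2-P_2)+(d_3d_4-P_4)+d_1d_4$ and its expansion to $s_2+2s_1+3$) is unaffected and correct.
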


    Using the skein rule, a formula for $v_3(K)=-2w_3(K)$ has been given
    in \cite[Lemma~2.2]{Varva} for all pretzel knots with odd coefficients.
    For a five-strand pretzel  knot $P=P(2k_1+1, \ldots , 2k_5+1)$
    the result provides

    \begin{lem}(\cite[Lemma~2.2]{Varva})
      \label{lem:v3}
      $w_3(K)=\frac{1}{2}(5+3s_1+s_1^2+s_2+\frac{1}{2}(s_3+s_1s_2))$,
      where the values of the elementary symmetric polynomials $s_1,
      s_2,s_3$ are as given in Proposition~\ref{prop:a2Formula}. \qed
      \end{lem}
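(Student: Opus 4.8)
The plan is to prove the formula by induction on the twist parameters, using the crossing-change formula for $w_3$ recorded in Section~\ref{sec:Obstructions} together with the value of $a_2$ supplied by Proposition~\ref{prop:a2Formula}. Writing $K=P(2k_1+1,\ldots,2k_5+1)$, I would single out one band, say the fifth, and change one of its crossings, so that $K_+=P(\ldots,2k_5+1)$ and $K_-=P(\ldots,2k_5-1)$ are pretzel knots whose fifth parameter differs by $2$, while the oriented resolution $L_0=K'\cup K''$ at that crossing is a two-component link. Since all coefficients are odd, the two strands of each band are antiparallel (this is exactly what makes Seifert's algorithm produce the genus-two surface with two disks and five bands referred to above), so the oriented smoothing is the Seifert smoothing and $L_0$ is a pretzel-type link on the remaining strands. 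The crossing-change formula then reads
\begin{equation*}
w_3(K_+)-w_3(K_-)=\tfrac{1}{2}\bigl(a_2(K')+a_2(K'')\bigr)-\tfrac{1}{4}\bigl(a_2(K_+)+a_2(K_-)+\ell k^2(K',K'')\bigr),
\end{equation*}
and every quantity on the right is accessible: the knot terms $a_2(K_\pm)$ come directly from Proposition~\ref{prop:a2Formula}, and the link terms reduce to invariants of $L_0$.

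The next step is to identify $L_0$ and evaluate $a_2(K')$, $a_2(K'')$ and $\ell k(K',K'')$ at each stage. Following the pattern already visible for $\nabla$ in Section~\ref{sec:generalStrands}, where the resolved object $Q=P(a_2,\ldots,a_n)$ was a two-component link with both components \emph{unknots} and $\ell k(Q)=-\tfrac12\sum a_i$, I expect the components $K',K''$ of $L_0$ to be unknots as well, so that $a_2(K')=a_2(K'')=0$ and only the $a_2(K_\pm)$ and $\ell k^2$ terms survive. The linking number $\ell k(K',K'')$ will be a linear function of the remaining parameters $k_1,\ldots,k_4$. Summing the differences as $k_5$ is decreased to its base value telescopes: the $\ell k^2$ terms, being squares of linear expressions summed over the twist, produce the quadratic symmetric combination, while the $a_2(K_\pm)=s_2+2s_1+3$ terms, themselves quadratic in the parameters, integrate to the cubic pieces $s_3$ and $s_1s_2$. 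Matching against a base case such as $P(1,1,1,1,1)=T_{2,5}$, whose $w_3$ I compute directly from $w_3=\tfrac{1}{72}V'''(1)+\tfrac{1}{24}V''(1)$, then pins down the additive constant $\tfrac{5}{2}$ and symmetrizing over the five bands completes the identification with the claimed expression.

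The hard part will be the precise combinatorial identification of the resolution links $L_0$ together with the correct orientation-induced signs at each crossing change; a sign error in any linking-number contribution would corrupt the quadratic and cubic terms. I would therefore arrange the induction so that the smoothed link is literally the same type of object at every step (as in the $\nabla$-computation, where $L_0$ was the fixed torus link $T_{2,q+r}$), reducing the problem to one clean telescoping sum per band. As a consistency check — and an alternative route should the direct bookkeeping prove delicate — one can instead extract the low-order derivatives of the Jones polynomial $V_K(t)$ at $t=1$ from the skein recursion of the Appendix and read off $w_3$ via $w_3(K)=\tfrac{1}{72}V'''_K(1)+\tfrac{1}{24}V''_K(1)$, with $6a_2=-V''(1)$ fixing the normalization $a_2=s_2+2s_1+3$. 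Since $v_3=-2w_3$, the two formulations are equivalent, and agreement of the two computations certifies the result.
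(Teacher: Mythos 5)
Your proposal follows essentially the same route as the paper: the lemma is quoted from \cite[Lemma~2.2]{Varva}, where (as the paper states) it is derived ``using the skein rule,'' i.e.\ precisely the crossing-change induction you outline --- telescoping over the twists in one band at a time, with the oriented resolution being a fixed two-component pretzel link whose components are unknots (so $a_2(K')=a_2(K'')=0$) and whose linking number is linear in the remaining parameters --- and your fallback via derivatives of the Jones polynomial matches the paper's own remark that the Appendix formula yields an alternative proof. The structural facts you invoke (antiparallel strands in each odd band, the resolved link being independent of the twist being reduced, the base case $P(1,1,1,1,1)=T_{2,5}$) are all correct, so the plan is sound and consistent with the cited argument.
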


    \begin{rem}
      The statements of Proposition~\ref{prop:a2Formula} and
      Lemma~\ref{lem:v3} in \cite{Varva} have been formulated for the
      case of $k_i\geq 0$; the proofs of these statements, however, hold in
      the wider generality we use them here.
      \end{rem}
    
    With these preparations in place, we can now turn to the
    verification of PCSC for five-strand pretzel knots.
    
    \begin{prop}
      \label{prop:fivestrandcase}
      Suppose that $P=P(a_1, a_2, a_3, a_4, a_5)$ is a five-strand
      pretzel knot with all coefficients odd. Then the purely cosmetic
      surgery conjecture holds for $P$.
      \end{prop}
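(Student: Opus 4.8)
The plan is to combine the genus and thickness information already in hand with the Conway- and $w_3$-obstructions. By Theorem~\ref{thm:gabai} every five-strand pretzel knot with all odd coefficients (under the standing assumptions) has genus $g(P)=2$, and by Proposition~\ref{prop:thickness} it satisfies $th(P)\le 1$. Feeding $g=2$ and $th\le 1$ into the dichotomy obtained in the proof of Corollary~\ref{cor:HanselmanThickness}, any orientation-preserving diffeomorphism $S^3_s(P)\cong S^3_r(P)$ with $r\ne s$ forces $\{r,s\}=\{\pm 2\}$ or $\{r,s\}=\{\pm 1\}$; in either case the Ni--Wu coefficient condition of Theorem~\ref{thm:NiWuMain} is satisfied (with $\frac{p}{q}=\frac{2}{1}$ or $\frac{1}{1}$), so the slope hypotheses of Theorem~\ref{thm:w3application} are met. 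Since the cases with $0,1,4$ or $5$ negative coefficients are settled by Proposition~\ref{prop:0145}, I may assume $P$ has exactly $2$ or $3$ negative coefficients; replacing $P$ by its mirror if necessary (which preserves PCSC, as $S^3_r(m(K))=-S^3_{-r}(K)$), I reduce to the case of exactly two negative coefficients.

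Next I would run the two polynomial obstructions in sequence. If $a_2(P)\ne 0$, then $\Delta''_P(1)=2a_2(P)\ne 0$, and Theorem~\ref{thm:BoyerLines} already rules out purely cosmetic surgeries. So assume $a_2(P)=0$. Since $q^2\equiv -1\pmod p$ holds for both candidate slope pairs, Theorem~\ref{thm:w3application} now applies: if in addition $w_3(P)\ne 0$, then $\lambda_2$ distinguishes $S^3_{p/q}(P)$ from $S^3_{-p/q}(P)$ and PCSC again holds. Thus the entire problem is pushed into the single residual case $a_2(P)=0$ and $w_3(P)=0$.

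To treat that residual case I would substitute the explicit formulas of Proposition~\ref{prop:a2Formula} and Lemma~\ref{lem:v3}, writing $a_i=2k_i+1$ and letting $s_j$ be the $j$-th elementary symmetric polynomial in the $k_i$. The condition $a_2(P)=0$ reads $s_2=-2s_1-3$; substituting this into the $w_3$-formula collapses it to $w_3(P)=1+\tfrac{1}{4}(s_3-s_1)$, so $w_3(P)=0$ becomes $s_3=s_1-4$. I would then analyze the Diophantine system $s_2=-2s_1-3$, $s_3=s_1-4$ together with the sign constraint (exactly two of the $k_i$ negative, the rest $\ge 0$) and the standing hypotheses on the $a_i$. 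The goal is to show that integrality of the $k_i$, the sign pattern, and Newton-type inequalities among the $s_j$ leave only finitely many solutions, each a knot of small crossing number and hence covered by the verification of PCSC for knots with at most $16$ crossings.

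The hard part will be precisely this last step: the two equations cut out a positive-dimensional locus in the symmetric functions, so ruling out cosmetic surgeries requires showing that the sign constraint and integrality keep the free parameters bounded. Concretely, parametrizing the two negative coefficients by $\sigma=k_i+k_j$, $\pi=k_ik_j$ and the three nonnegative ones by their symmetric functions $P,Q,R$, the system becomes linear in $(\sigma,\pi)$ and solves to a rational expression for $\sigma$ in terms of $(P,Q,R)$; one must then check that its integrality, the positivity $\sigma\ge 2,\ \pi\ge 1$, and the reality condition $4\pi\le\sigma^2$ can hold only for finitely many $(P,Q,R)$. Pinning down that finite list and disposing of its members directly is where the genuine work lies, and I expect this to be the main obstacle of the proof.
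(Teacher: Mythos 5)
Your reduction scheme coincides with the paper's: Proposition~\ref{prop:0145} disposes of the $0,1,4,5$-negative cases, Theorem~\ref{thm:BoyerLines} handles $a_2(P)\neq 0$, and Theorem~\ref{thm:w3application} (together with Theorem~\ref{thm:NiWuMain}; your detour through Hanselman's slope dichotomy is correct but not needed) handles $a_2(P)=0$, $w_3(P)\neq 0$. The genuine gap is that you stop exactly where the paper's proof actually begins. In the residual case $a_2(P)=w_3(P)=0$ the paper does not prove finiteness of a solution set and then check a list; it derives from $s_2=-2s_1-3$ and $s_3=s_1+2$ the two identities $\sum_i a_i^2=S^2+4$ and $\sum_i a_i^3=S^3$, where $S=\sum_i a_i$, and then uses the hypothesis that both $\posit$ and $\negat$ have two or three elements: if $\sum_{i\in\posit}a_i^2\le S^2$ and $\sum_{i\in\negat}a_i^2\le S^2$ both hold, then $\vert a_i\vert \le \vert S\vert$ for all $i$, giving $-\vert S\vert^3<\sum_i a_i^3<\vert S\vert^3$ and contradicting $\sum_i a_i^3=S^3$; if one of them fails, the other side consists entirely of coefficients $\pm 1$, and two short explicit computations (the paper's Cases I and II) finish the argument. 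Nothing in your outline produces this contradiction: your proposed $(\sigma,\pi,P,Q,R)$ parametrization leaves an a priori three-parameter family of integer points, and ``integrality and sign constraints bound the parameters'' is precisely the assertion that needs proof --- in fact the truth is stronger (essentially no solutions at all beyond degenerate ones), and it is obtained by the cube-sum trick, not by a boundedness argument.

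Second, your residual system is not the right one, though this is the paper's fault rather than yours. From Lemma~\ref{lem:v3} as printed you correctly get $w_3=1+\frac{1}{4}(s_3-s_1)$ after substituting $s_2=-2s_1-3$, hence $s_3=s_1-4$; the paper instead uses $s_3=s_1+2$. The printed formula is inconsistent with the rest of the paper: testing $P(3,1,1,1,1)$ (which is $7_3$ up to mirror, with $s_1=1$, $s_2=s_3=0$) against the paper's own crossing-change formula for $w_3$, or against the Jones polynomial produced by the paper's Appendix, gives $w_3=\frac{11}{2}$, whereas the printed formula gives $\frac{9}{2}$. The formula consistent with these checks is $w_3=\frac{1}{2}\bigl(5+5s_1+s_1^2+2s_2+\frac{1}{2}(s_3+s_1s_2)\bigr)$, and with it the substitution yields exactly $s_3=s_1+2$, i.e.\ the identity the paper's proof actually uses. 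This matters for your plan: the identity $\sum_i a_i^3=S^3$, the engine of the contradiction, requires $s_3=s_1+2$; with $s_3=s_1-4$ one gets $\sum_i a_i^3=S^3-144$ instead, and the clean argument collapses. So even if you completed your Diophantine analysis, you would be analyzing a locus that does not describe the knots with vanishing $a_2$ and $w_3$.
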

    
    \begin{proof}
      We can assume that there are two or three negative coefficients
      among the $\{ a_i\}_{i=1}^5$, since (by
      Proposition~\ref{prop:0145}) in the other cases PCSC holds.  If
      $P$ has $a_2(P)\neq 0$, then Theorem~\ref{thm:BoyerLines}
      implies the result. If $a_2(P)=0$ and $w_3(P)\neq 0$, then
      Theorem~\ref{thm:w3application} concludes the argument.  Suppose
      therefore that $P=P(2k_1+1, \ldots , 2k_5+1)$ has $a_2(P)=0$
      (implying that $s_2=-2s_1-3$) and $w_3(P)=0$, implying in the
      light of Lemma~\ref{lem:v3} (after substituting $s_2=-2s_1-3$)
      that $s_3=s_1+2$.

   By using the standard identities
      \[
      \sum _{i=1}^5 k_i^2 =s_1^2-2s_2, \qquad \sum _{i=1}^5 k_i^3=s_1^3-3s_1s_2+3s_3,
      \]
      and substituting $s_2=-2s_1-3$ and  $s_3=s_1+2$, we get
\[
\sum_{i=1}^5 k_i^2 = s_1^2+4s_1+6=(s_1+2)^2+2, \qquad
\sum _{i=1}^5k_i^3 = s_1^3+6s_1^2+12s_1+6=(s_1+2)^3-2.
\]
Let $S=\sum _{i=1}^5a_i$. Since $S=2s_1+5$, we get that
\[
\sum a_i^2=S^2+4, \qquad \sum a_i^3=S^3.
\]

Let 
\[
\posit =\{ i\in \{ 1,\ldots ,5\}\mid a_i>0\}, \qquad
\negat =\{ i\in \{ 1,\ldots ,5\}\mid a_i<0\}.
\]

By our assumption on the signs of the $k_i$, we can assume that
both $\posit$ and $\negat$ have two or three elements, implying
that
\begin{equation}\label{eq:posneg}
\sum _{i\in \posit}a_i^3>0, \qquad \sum _{i\in \negat}a_i^3<0.
\end{equation}

We can also assume that one of the two inequalities
\begin{equation}\label{eq:bounds}
\sum _{i\in \posit }a_i^2\leq S^2, \qquad \sum _{i\in \negat }a_i^2\leq S^2
\end{equation}
holds, since the violation of both would imply $2S^2\leq S^2+4$, hence
$S^2\leq 4$, so $\sum _{i=1}^5a_i^2\leq 8$, therefore $P$ is a knot of
crossing number less than 16, for which PCSC holds true.

Assume first that \emph{both} inequalities in Equation~\eqref{eq:bounds}
are satisfied. In this case $\vert a_i \vert \leq \vert S\vert$,
hence
\[
\sum _{i\in \posit }a_i^3\leq \sum _{i\in \posit }\vert S\vert a_i^2\leq \vert S\vert ^3
\]
and
\[
\sum _{i\in \negat }a_i^3\geq \sum _{i\in \negat }-\vert S\vert a_i^2\geq -\vert S\vert ^3.
\]

Combining these inequalities with the ones from Equation~\eqref{eq:posneg}
we get
\[
-\vert S\vert ^3<\sum _{i=1}^5 a_i^3< \vert S \vert ^3,
\]
providing a contradiction to $\sum _{i=1}^5 a_i^3=S^3$.  This
shows, that under the assumptions that both 
inequalities of Equation~\eqref{eq:bounds} hold, if
$a_2(P)=0$ then $w_3(P)\neq 0$.

Assume now that one of the inequalities of Equation~\eqref{eq:bounds}
is false. This implies that terms in the other inequality sum up to at
most 3, implying that all terms in this other inequality satisfy
$a_i^2=1$, i.e. $a_i=\pm 1$ (with the same sign).  By possibly
mirroring the knot, we can assume that these terms are all equal to 1,
hence the corresponding $k_i=0$. By our previous assumption, there
are two or three such coefficients.

{\bf {Case I}}: Suppose first that there are three positive
coefficients $a_1=a_2=a_3=1$ and $a_4,a_5<0$.  This implies that
$k_1=k_2=k_3=0$, hence when computing $a_2(P)$, we get that it is
equal to $3+2(k_4+k_5)+k_4k_5$, while the expression $s_3-s_1-2$ is
equal to $-k_4-k_5-2$. If the corresponding pretzel knot violates
PCSC, both expressions need to be zero, and we get $k_4k_5=1$, hence
$k_4=k_5=-1$. Since $(a_1, \ldots , a_5)=(1,1,1,-1,-1)$ gives the
unknot, we can ignore this case.

{\bf {Case II}}: Suppose that there are two positive coefficients $a_1=a_2=1$,
and $a_3,a_4,a_5<0$. With the usual definition of $k_i$
as $a_i=2k_i+1$, we have that $k_1=k_2=0$ and $a_2(P)=k_3k_4+k_3k_5+k_4k_5+
2(k_3+k_4+k_5)+3$ and $s_3-s_1-2=k_3k_4k_5-k_3-k_4-k_5-2$. If one of them
is nonzero, then $P$ satisfies PCSC. If both are zero, then so is their sum:
\[
2k_3k_4k_5+k_3k_4+k_3k_5+k_4k_5-1=0.
\]
Writing this sum as
\begin{equation}\label{eq:vegsosum}
k_3k_4(k_5+1)+k_3k_5(k_4+1)+k_4k_5-1,
\end{equation}
the first two terms are negative unless $k_5=-1$ or $k_4=-1$, in which
cases the knot has (at most) three strands; the same applies if
$k_3=-1$. Since $k_3(k_5+1) >\vert k_5\vert $ or $k_3(k_4+1) > \vert
k_4\vert $ once $k_3<-1$, the expression of Equation~\ref{eq:vegsosum}
is negative, providing the desired contradiction.
\end{proof}


\begin{proof}[Proof of Theorem~\ref{thm:main1}]
    The proof of the theorem for the case of $n=3$ is provided by
    Proposition~\ref{prop:3strandFinal}.  When $a_1$ is even and
    $n\geq 4$, the result is proved in Theorem~\ref{thm:many}.  When
    $n\geq 6$ and all $a_i$ are odd, Proposition~\ref{prop:nBigOdd}
    gives the result.  Finally in the cases when $n=5$ and all $a_i$ odd,
    Proposition~\ref{prop:fivestrandcase} verifies the claim. This
    completes the proof of Theorem~\ref{thm:main1}.
\end{proof}

\section{Appendix: the Jones polynomial for pretzel knots}
In this section we provide a convenient formula for the Jones
polynomial of pretzel knots with odd coefficients.
Recall that the Jones polynomial $V_K(t)$ is defined by the skein relation
\[
t^{-1}V_{L_+}(t)-tV_{L_-}(t)=(t^{\frac{1}{2}}-t^{-\frac{1}{2}})V_{L_0}(t)
\]
and normalization $V_{U}(t)=1$ on the unknot $U$.

Suppose that $P=P(a_1,\ldots , a_n)$ is an $n$-strand pretzel knot
with $a_i$ odd. Let $s=t^{\frac{1}{2}}$, $k\in \Z$ be an integer and
$v_i\in \{ 0,1\}$.
We define functions $P_{v_i,k}(s)$ as
follows. For $v_i=0$ take
\[
P_{0,k}(s) = -s^{-2k}.
\]
If $v_i = 1$ and $k >0$, take
\[
P_{1,k}(s) = \sum_{j=1} ^k (-1)^j\cdot s^{1-2j};
\]
and if $v_i = 1$ and  $k <0$, take
\[
P_{1,k}(s)= \sum _{j = 1} ^{-k} (-1)^j\cdot s^{-1+2j}.
\]
For a fixed vector $v\in \{ 0,1\}^n$ multiply the terms $P_{v_i,a_i}(s)$
corresponding
to the twisting numbers $a_1, \ldots , a_n$ of the given pretzel knot,
and multiply the result with the Jones polynomial of the
$d(v)$-component unlink, where $d(v) = |(n-1)- \sum_{i= 1} ^n v_i |$,
resulting in
\[
Q_{v,a_1, \ldots , a_n} (s) = (-s-s^{-1})^{d(v)}\cdot P_{v_1,a_1}(s)\cdot P_{v_2,a_2}(s)
\cdots P_{v_n,a_n}(s).
\]
Finally, add these terms and get 
$W_{P}(s) =\sum _{v\in \{ 0,1\}^n} Q_{v,a_1, \ldots , a_n}(s)$.
The verification of the fact that we get the Jones polynomial follows the
same route as the description of the Jones polynomial
through spanning tree expansion, as given in \cite{Morwen}.

\begin{prop}
  With the substitution $t = s^2$ the function $W_P(s)$ provides the
  Jones polynomial $V_P(t)$ of the $n$-strand pretzel knot $P$ with all
  odd coefficients. \qed
  \end{prop}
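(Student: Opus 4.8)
The plan is to compute the Kauffman bracket $\langle P\rangle$ of the standard pretzel diagram by a state sum in which each twist region is resolved \emph{as a whole} rather than crossing by crossing, and then to pass to the Jones polynomial via the writhe correction $V_P(t)=(-A^3)^{-w}\langle P\rangle$ together with the substitution $t=A^{-4}=s^2$, i.e.\ $s=A^{-2}$. Each $a_i$-crossing twist region is a $2$-tangle, and expanding its crossings by the skein relation $\langle\times\rangle=A\langle\,)(\,\rangle+A^{-1}\langle\asymp\rangle$ writes it as a linear combination $\alpha_{a_i}\mathbf{1}+\beta_{a_i}\mathbf{U}$ of the vertical tangle $\mathbf{1}=\,)($ and the horizontal cap--cup tangle $\mathbf{U}=\asymp$. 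Hence $\langle P\rangle=\sum_{v\in\{0,1\}^n}\big(\prod_{i=1}^n c_{v_i,a_i}\big)\,\delta^{\,\ell(v)-1}$, where $v_i=0$ records the vertical resolution (coefficient $c_{0,a_i}=\alpha_{a_i}$), $v_i=1$ the horizontal one (coefficient $c_{1,a_i}=\beta_{a_i}$), $\delta=-A^2-A^{-2}$ is the loop value, and $\ell(v)$ is the number of circles in the fully resolved diagram. This is exactly the grouped form of the spanning tree expansion of \cite{Morwen}, and the proof reduces to identifying three ingredients: the coefficients $\alpha,\beta$, the loop count $\ell$, and the framing factor.

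For the coefficients I would solve the one-step recursion obtained by adding a crossing to the top of a twist region: one gets $\alpha_k=A^k$ immediately, while $\beta_k$ satisfies $\beta_{k+1}=A^{k-1}-A^{-3}\beta_k$ with $\beta_0=0$, which unrolls to the finite alternating sum $\beta_k=\sum_{j=0}^{k-1}(-1)^jA^{k-2-4j}$. Distributing the global writhe factor as one factor $(-A^3)^{-w_i}$ per region and then substituting $s=A^{-2}$ turns $\alpha_{a_i}$ and $\beta_{a_i}$ into $P_{0,a_i}(s)$ and $P_{1,a_i}(s)$ on the nose; the powers of $s$ and the signs match precisely once one uses $w_i=-a_i$ (checking $k=1$ and $k=3$ already pins down every convention). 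Here I would record that in an all-odd pretzel knot the two strands of every twist region are anti-parallel --- each odd region is a strand-swap, and the single closed-up component runs alternately up and down through the regions --- so each region contributes $-a_i$ to the writhe; the case $a_i<0$ then follows from the mirror symmetry $A\leftrightarrow A^{-1}$, $s\leftrightarrow s^{-1}$, which carries $P_{1,k}$ with $k>0$ to $P_{1,k}$ with $k<0$.

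The heart of the argument, and the step I expect to be the main obstacle, is the loop count: proving $\ell(v)=|(n-1)-\sum_i v_i|+1$. The plan is to contract the $2n$ outer connecting arcs of the diagram --- which form a perfect matching on the $4n$ corners of the twist regions --- reducing the resolved diagram to a $2$-regular graph $H$ on the $2n$ vertices $T_1,\dots,T_n,B_1,\dots,B_n$ arranged as a top and a bottom ring. A horizontal region $i$ then contributes the ring edges $T_{i-1}T_i$ and $B_{i-1}B_i$, whereas a vertical region $i$ contributes the two \emph{position-preserving} rungs $T_{i-1}B_{i-1}$ and $T_iB_i$, and $\ell(v)$ equals the number of cycles of $H$. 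Writing $h=\sum_i v_i$, the top-ring edges are exactly the $h$ edges of the $n$-cycle sitting at horizontal positions, so for $h<n$ they cut the top ring into $n-h$ arcs, and likewise the bottom ring into $n-h$ arcs indexed by the \emph{same} cyclic intervals. Since the rungs are vertical, the two ends of the top arc over an interval $[p,q]$ join --- through the rungs at $p$ and at $q$ --- exactly to the two ends of the bottom arc over the same interval, so each interval closes into a single circle; hence $\ell(v)=n-h$ when $h<n$. When $h=n$ no edge is deleted, the two full rings give $\ell(v)=2$, and both cases are summarised by $|(n-1)-h|+1$. The delicate points are the precise corner matching that produces $H$ and the bookkeeping of the rungs at interval boundaries, including the degenerate bigon when two vertical regions are adjacent.

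Finally I would assemble the pieces: writing $(-A^3)^{-w}=\prod_{i=1}^n(-A^3)^{-w_i}$, absorbing each factor into the matching tangle coefficient, and using $\delta=-A^2-A^{-2}\mapsto -s-s^{-1}$ under $s=A^{-2}$, the bracket state sum becomes $\sum_{v}(-s-s^{-1})^{d(v)}\prod_{i=1}^n P_{v_i,a_i}(s)=W_P(s)$ with $d(v)=\ell(v)-1$, which under $t=s^2$ is the Jones polynomial $V_P(t)$. The only genuinely convention-sensitive points are the orientation-dependent sign of $w_i$ and the choice of square root $s=A^{-2}$ rather than $-A^{-2}$; both are fixed once and for all by matching a single small example such as the trefoil $P(1,1,1)$.
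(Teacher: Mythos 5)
Your proposal is correct and takes essentially the same route the paper itself invokes: the paper's proof is just a pointer to Thistlethwaite's state-sum (spanning-tree) expansion of the Jones polynomial, and your Kauffman-bracket state sum grouped by twist regions --- with the coefficients $\alpha_k=A^k$, $\beta_k=\sum_{j=0}^{k-1}(-1)^jA^{k-2-4j}$, the loop count $\ell(v)=|(n-1)-\sum_i v_i|+1$, and the writhe bookkeeping $w_i=-a_i$ coming from the anti-parallel strands in each odd region --- is exactly that route carried out in full detail. All three ingredients check out (e.g.\ your assembled formula reproduces $V_{P(1,1,1)}(t)=t^{-1}+t^{-3}-t^{-4}$, matching the paper's $W_P$), so the argument is sound.
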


\begin{rem}
  This formula can also be used to prove the formula of Lemma~\ref{lem:v3}.
  \end{rem}



\bibliography{biblio} \bibliographystyle{plain}

\end{document}